\documentclass[12pt,a4paper,reqno]{amsart}
\usepackage{amsfonts,amsthm,amsmath,amssymb,
	amsbsy,euscript}
\usepackage{tikz}
\usepackage{tkz-graph}





\setlength{\headheight}{32pt}
\setlength{\headsep}{29pt}
\setlength{\footskip}{28pt}
\setlength{\textwidth}{444pt}
\setlength{\textheight}{636pt}
\setlength{\marginparsep}{7pt}
\setlength{\marginparpush}{7pt}
\setlength{\oddsidemargin}{4.5pt}
\setlength{\marginparwidth}{55pt}
\setlength{\evensidemargin}{4.5pt}
\setlength{\topmargin}{-15pt}
\setlength{\footnotesep}{8.4pt}
\setcounter{tocdepth}{1}

\newtheorem{theor}{Theorem}

\theoremstyle{definition}

\newtheorem{proposition}[theor]{Proposition}
\newtheorem{lemma}[theor]{Lemma}
\newtheorem{cor}[theor]{Corollary}
\newtheorem{define}{Definition}

\newtheorem*{notation}{Notation}

\newtheorem{example}{Example}
\theoremstyle{remark}
\newtheorem{rem}{Remark}


\newcommand{\cEv}{\partial}
\newcommand{\pinner}{\mathbin{\mathchoice
{\hbox{\vrule width0.6em depth0pt height0.4pt
	\vrule width0.4pt depth0pt height0.8ex}}
{\hbox{\vrule width0.6em depth0pt height0.4pt
	\vrule width0.4pt depth0pt height0.8ex}}
{\hbox{\kern0.14em
	\vrule width0.48em depth0pt height0.4pt
	\vrule width0.4pt depth0pt height0.6ex\kern0.14em}}
{\hbox{\kern0.1em
	\vrule width0.39em depth0pt height0.4pt
	\vrule width0.4pt depth0pt height0.5ex\kern0.1em}}}}
\newcommand{\inner}{\pinner\,}

\let \wt=\widetilde
\newcommand{\BBR}{\mathbb{R}}\newcommand{\BBC}{\mathbb{C}}
\newcommand{\BBF}{\mathbb{F}}\newcommand{\BBN}{\mathbb{N}}
\newcommand{\BBS}{\mathbb{S}}\newcommand{\BBT}{\mathbb{T}}
\newcommand{\BBZ}{\mathbb{Z}}\newcommand{\BBE}{\mathbb{E}}
\newcommand{\EuA}{{{\EuScript A}}}
\newcommand{\EuX}{{{\EuScript X}}}
\newcommand{\cA}{{{\EuScript A}}}
\newcommand{\bcA}{\boldsymbol{\mathcal{A}}}
\newcommand{\mcA}{\mathcal{A}}
\newcommand{\bcP}{{\boldsymbol{\mathcal{P}}}}
\newcommand{\bcQ}{{\boldsymbol{\mathcal{Q}}}}
\newcommand{\cB}{\mathcal{B}}
\newcommand{\cC}{\mathcal{C}}\newcommand{\tcC}{\smash{\widetilde{\mathcal{C}}}}
\newcommand{\cD}{\mathcal{D}}
\newcommand{\tcX}{\smash{\widetilde{\mathcal{X}}}}
\newcommand{\cE}{\mathcal{E}}\newcommand{\tcE}{\smash{\widetilde{\mathcal{E}}}}
\newcommand{\cEL}{\mathcal{E}_{\IL}}
\newcommand{\cEEL}{{\cE}_{\text{\textup{EL}}}}
\newcommand{\cEKdV}{{\cE}_{\text{\textup{KdV}}}}
\newcommand{\cELiou}{{\cE}_{\text{\textup{Liou}}}}
\newcommand{\cEToda}{{\cE}_{\text{\textup{Toda}}}}
\newcommand{\cF}{\mathcal{F}}
\newcommand{\cH}{\mathcal{H}}
\newcommand{\cN}{\mathcal{N}}
\newcommand{\cI}{\mathcal{I}}
\newcommand{\cJ}{\mathcal{J}}
\newcommand{\cL}{\mathcal{L}}
\newcommand{\cO}{\mathcal{O}}
\newcommand{\cP}{\mathcal{P}}\newcommand{\cR}{\mathcal{R}}
\newcommand{\cQ}{\mathcal{Q}}
\newcommand{\cU}{\mathcal{U}}
\newcommand{\cV}{\mathcal{V}}
\newcommand{\cW}{\mathcal{W}}
\newcommand{\cX}{{\EuScript X}}    
\newcommand{\cY}{{\EuScript Y}}    
\newcommand{\cZ}{{\EuScript Z}}    
\newcommand{\boldb}{{\boldsymbol{b}}}
\newcommand{\Bone}{{\boldsymbol{1}}}
\newcommand{\bc}{{\mathbf{c}}}
\newcommand{\ba}{{\boldsymbol{a}}}
\newcommand{\bb}{{\boldsymbol{b}}}
\newcommand{\bbD}{{\boldsymbol{\mathrm{D}}}}
\newcommand{\bbf}{{\boldsymbol{f}}}
\newcommand{\bi}{{\boldsymbol{i}}}
\newcommand{\bn}{{\boldsymbol{n}}}
\newcommand{\bp}{{\boldsymbol{p}}}
\newcommand{\bq}{{\boldsymbol{q}}}
\newcommand{\br}{{\boldsymbol{r}}}
\newcommand{\bs}{{\boldsymbol{s}}}
\newcommand{\bu}{{\boldsymbol{u}}}
\newcommand{\bv}{{\boldsymbol{v}}}
\newcommand{\bw}{{\boldsymbol{w}}}
\newcommand{\bx}{{\boldsymbol{x}}}
\newcommand{\bby}{{\boldsymbol{y}}}
\newcommand{\bz}{{\boldsymbol{z}}}
\newcommand{\bA}{{\boldsymbol{A}}}
\newcommand{\bD}{{\boldsymbol{D}}}
\newcommand{\bF}{{\boldsymbol{F}}}
\newcommand{\bH}{{\boldsymbol{H}}}
\newcommand{\bL}{{\boldsymbol{L}}}
\newcommand{\bN}{{\boldsymbol{N}}}
\newcommand{\bP}{{\boldsymbol{P}}}
\newcommand{\bQ}{{\boldsymbol{Q}}}
\newcommand{\bbU}{{\boldsymbol{U}}}
\newcommand{\bV}{{\boldsymbol{V}}}
\newcommand{\bE}{\mathbf{E}}
\newcommand{\bR}{\mathbf{r}}
\newcommand{\bS}{{\boldsymbol{S}}}
\newcommand{\bal}{{\boldsymbol{\alpha}}}
\newcommand{\bpi}{{\boldsymbol{\pi}}}
\newcommand{\bpsi}{{\boldsymbol{\psi}}}
\newcommand{\bxi}{{\boldsymbol{\xi}}}
\newcommand{\bet}{{\boldsymbol{\eta}}}
\newcommand{\bom}{{\boldsymbol{\omega}}}
\newcommand{\bOm}{{\boldsymbol{\Omega}}}
\newcommand{\bPhi}{{\boldsymbol{\Phi}}}
\newcommand{\bU}{\mathbf{U}}
\newcommand{\binfty}{\pmb{\infty}}
\newcommand{\BOne}{{\boldsymbol{1}}}
\newcommand{\BTwo}{{\boldsymbol{2}}}
\newcommand{\bsquare}{\pmb{\square}}
\newcommand{\bun}{\mathbf{1}}
\newcommand{\ga}{\mathfrak{a}}
\newcommand{\gothe}{\mathfrak{e}}
\newcommand{\gf}{\mathfrak{f}}
\newcommand{\hgf}{\smash{\widehat{\mathfrak{f}}}}
\newcommand{\gh}{\mathfrak{h}}
\newcommand{\hgh}{\smash{\widehat{\mathfrak{h}}}}
\newcommand{\gm}{\mathfrak{m}}
\newcommand{\gothg}{\mathfrak{g}}
\newcommand{\gotht}{\mathfrak{t}}
\newcommand{\gu}{\mathfrak{u}}
\newcommand{\gA}{\mathfrak{A}}
\newcommand{\gB}{\mathfrak{B}}
\newcommand{\gN}{\mathfrak{N}}
\newcommand{\gM}{\mathfrak{M}}
\newcommand{\veps}{\varepsilon}
\newcommand{\vph}{\varphi}
\newcommand{\dd}{\partial}
\newcommand{\Id}{{\mathrm d}}
\newcommand{\ID}{{\mathrm D}}
\newcommand{\IL}{{\mathrm L}}
\newcommand{\rmi}{{\mathrm i}}
\newcommand{\rP}{{\mathrm P}}
\newcommand{\fnh}{{\text{\textup{FN}}}}
\newcommand{\rme}{{\mathrm{e}}}
\newcommand{\rmN}{{\mathrm{N}}}
\newcommand{\uu}{{\underline{u}}}
\newcommand{\uv}{{\underline{v}}}
\newcommand{\uw}{{\underline{w}}}
\newcommand{\sft}{{\mathsf{t}}}
\newcommand{\vx}{{\vec{\mathrm{x}}}}
\newcommand{\vy}{{\vec{\mathrm{y}}}}
\newcommand{\vz}{{\vec{\mathrm{z}}}}
\newcommand{\bvx}{{\vec{\mathbf{x}}}}
\newcommand{\vt}{{\vec{\mathrm{t}}}}
\newcommand{\vgdd}{{\vec{\mathfrak{d}}}}
\newcommand{\sfE}{\mathsf{E}}

\newcommand{\bbx}{{\boldsymbol{x}}}
\newcommand{\bbu}{{\boldsymbol{u}}}
\newcommand{\bbv}{{\boldsymbol{v}}}
\newcommand{\bbP}{{\boldsymbol{P}}}

\newcommand{\diftat}[2]{ \left. \frac{\Id}{\Id #1} \right|_{#1=#2} }    
\newcommand{\BV}{{\text{\textup{BV}}}}

\DeclareMathOperator{\Span}{span}
\DeclareMathOperator{\Sol}{Sol}
\DeclareMathOperator{\img}{im}
\DeclareMathOperator{\dom}{dom}
\DeclareMathOperator{\id}{id}
\DeclareMathOperator{\rank}{rank}
\DeclareMathOperator{\sym}{sym}
\DeclareMathOperator{\cosym}{cosym}
\DeclareMathOperator{\pt}{pt}
\DeclareMathOperator{\arcsinh}{arcsinh}
\DeclareMathOperator{\coker}{coker}
\DeclareMathOperator{\Hom}{Hom}
\DeclareMathOperator{\End}{End}
\DeclareMathOperator{\Der}{Der}
\DeclareMathOperator{\Mat}{Mat}
\DeclareMathOperator{\poly}{poly}
\DeclareMathOperator{\CDiff}{\mathcal{C}Diff}
\DeclareMathOperator{\Diff}{Diff}
\DeclareMathOperator{\ord}{ord}
\DeclareMathOperator{\volume}{vol}
\DeclareMathOperator{\dvol}{d
	vol}
\DeclareMathOperator{\diag}{diag}
\DeclareMathOperator{\ad}{ad}
\DeclareMathOperator{\Ber}{Ber}
\DeclareMathOperator{\tr}{tr}
\newcommand{\Free}{\text{\textsf{Free}}\,}
\newcommand{\Sl}{\mathfrak{sl}}
\newcommand{\Gl}{\mathfrak{gl}}
\DeclareMathOperator{\const}{const}

\DeclareMathOperator{\Alt}{Alt}

\DeclareMathOperator{\Jac}{Jac}

\DeclareMathOperator{\GH}{gh}
\DeclareMathOperator*{\bigotimesk}{{\bigotimes\nolimits_{\Bbbk}}}
\DeclareMathOperator{\supp}{supp}

\newcommand{\td}{\widetilde{d}}
\newcommand{\tu}{\widetilde{u}}
\newcommand{\tv}{\widetilde{v}}
\newcommand{\tV}{\widetilde{V}}
\newcommand{\hxi}{\widehat{\xi}}
\newcommand{\lshad}{[\![}
\newcommand{\rshad}{]\!]}
\newcommand{\ov}{\overline}
\newcommand{\nC}{{\text{\textup{nC}}}}
\newcommand{\KdV}{{\text{KdV}}}
\newcommand{\ncKdV}{{\text{ncKdV}}}
\newcommand{\mKdV}{{\text{mKdV}}}
\newcommand{\pmKdV}{{\text{pmKdV}}}
\newcommand{\EL}{{\text{EL}}}
\newcommand{\Liou}{{\text{Liou}}}
\newcommand{\scal}{{\text{scal}}} 

\newcommand{\ib}[3]{ \{\!\{ {#1},{#2} \}\!\}_{{#3}} }
\newcommand{\schouten}[1]{\lshad {#1} \rshad}

\newcommand{\by}[1]{\textit{{#1}}}
\newcommand{\jour}[1]{\textit{{#1}}}
\newcommand{\vol}[1]{\textbf{{#1}}}
\newcommand{\book}[1]{\textrm{{#1}}}

\newcommand{\ground}[1]{\text{\textit{\small #1}}}

\newcommand*{\vcenteredhbox}[1]{\begingroup
\setbox0=\hbox{#1}\parbox{\wd0}{\box0}\endgroup}

\DeclareSymbolFont{extraup}{U}{zavm}{m}{n}
\DeclareMathSymbol{\varheartsuit}{\mathalpha}{extraup}{86}
\DeclareMathSymbol{\vardiamondsuit}{\mathalpha}{extraup}{87}

\def\oldvec{\mathaccent "017E\relax } 
\DeclareMathOperator{\Or}{\mathsf{O\oldvec{r}}}

\hyphenation{Kon-tse-vich}

\title[The heptagon\/-\/wheel cocycle in the Kontsevich graph complex]%
{The heptagon\/-\/wheel cocycle\\[1.5pt] in the Kontsevich graph complex}

\author[R.\,Buring]{Ricardo Buring${}^{\text{(\symbol{"61})}}$}
\thanks{${}^{\text{(a)}}$\textit{Address}: 
Institut f\"ur Mathematik, 
Johannes Gutenberg\/--\/Uni\-ver\-si\-t\"at,
Staudingerweg~9, 
\mbox{D-\/55128} Mainz, Germany.}

\author[A.\,V.\,Kiselev]{Arthemy Kiselev${}^{\text{(\symbol{"62},\symbol{"63})}}$}
\thanks{${}^{\text{(b)}}$\textit{Address}: Johann Ber\-nou\-lli Institute for Mathematics and Computer Science, University of Gro\-nin\-gen,
P.O.~Box 407, 9700~AK Groningen, The Netherlands. 
\quad${}^{\text{(c)}}$\textit{E-mail}: \texttt{A.V.Kiselev\symbol{"40}rug.nl}}

\author[N.\,J.\,Rutten]{Nina Rutten${}^{\text{(\symbol{"62})}}$}

\dedicatory{Special Issue JNMP 2017 ``Local \textsl{\&} nonlocal symmetries in Mathematical Physics''}

\date{24 November 2017}

\subjclass[2010]{
13D10, 
32G81, 
53D17, 
81S10, 
also
53D55, 
58J10, 
90C35. 
}


\keywords{Non\/-\/oriented graph complex, differential, cocycle, symmetry, Poisson geometry}

\begin{document}
\begin{abstract}
The real vector space of non\/-\/oriented graphs is known to carry a differential graded Lie algebra structure.
Cocycles 
in the Kontsevich graph complex, 
expressed using formal sums of 
graphs on $n$~vertices and $2n-2$~edges, induce --\,under the orientation mapping\,-- infinitesimal symmetries of classical Poisson structures on arbitrary finite-dimensional affine real manifolds.
Willwacher 
has 
stated the existence of a nontrivial
cocycle that contains the $(2\ell+1)$-\/wheel graph with a nonzero coefficient
at every $\ell\in\BBN$.
We present detailed calculations of the differential of graphs;
for the tetrahedron and pentagon\/-\/wheel cocycles, 
consisting at $\ell = 1$ and~$\ell = 2$ of one and two graphs respectively,
the cocycle condition $\Id(\gamma) = 0$ is verified by hand.
For the next, heptagon\/-\/wheel cocycle (known to exist at~$\ell = 3$),
we provide an explicit representative: it consists of $46$~graphs on $8$~vertices and $14$~edges.
\end{abstract}
\maketitle

\thispagestyle{empty}
\enlargethispage{0.7\baselineskip}
\subsection*{Introduction}
The structure of differential graded Lie algebra on the space of non-oriented graphs, as well as the cohomology groups of the graph complex, were introduced by Kontsevich in the context of mirror symmetry \cite{MKParisECM, MKZurichICM}.
It can be shown that by orienting a graph cocycle on $n$ vertices and $2n-2$ edges (and by adding to every graph in that cocycle two new edges going to two sink vertices) in all such ways that each of the $n$ old vertices is a tail of exactly two arrows, and by placing a copy of a given Poisson bracket $\cP$ in every such vertex, one obtains an infinitesimal symmetry of the space of Poisson structures. This construction is universal with respect to all finite-dimensional affine real manifolds (see \cite{Ascona96} and \cite{f16}).\footnote{%
The dilation $\dot{\cP} = \cP$, also universal with respect to all Poisson manifolds, is obtained by orienting the graph~$\bullet$ on one vertex and no edges, yet that graph is not a cocycle, $\Id(\bullet)=-\bullet\!\!{-}\!\!\bullet\neq0$. The single\/-\/edge graph $\bullet\!\!{-}\!\!\bullet \in\ker\Id$ on two vertices is a cocycle but its bi\/-\/grading differs from~$(n,2n-2)$. However, by satisfying the zero\/-\/curvature equation $\Id(\bullet\!\!{-}\!\!\bullet)+\tfrac{1}{2}[\bullet\!\!{-}\!\!\bullet,\bullet\!\!{-}\!\!\bullet]=0$ the graph $\bullet\!\!{-}\!\!\bullet$ is a Maurer\/--\/Cartan element in the graph complex.}
Until recently two such 
differential\/-\/polynomial symmetry flows were known (of nonlinearity degrees $4$ and $6$ respectively).
Namely, the tetrahedral graph flow $\dot{\cP} = \cQ_{1:\frac{6}{2}}(\cP)$ was proposed in the seminal paper~\cite{Ascona96} (see also~\cite{f16, tetra16}).
Consisting of $91$~oriented bi-vector graphs on $5+1 = 6$ vertices, the Kontsevich\/--\/Willwacher pentagon\/-\/wheel flow will presently be described in~\cite{WeFactorize5Wheel}.

The cohomology of the graph complex in degree~$0$ is known to be isomorphic to
the Grothendieck\/--\/Teichm\"uller 
Lie algebra~$\mathfrak{grt}$ 
(see~\cite{DrinfeldGRT1990} and~\cite{WillwacherGRT}); 
under the isomorphism, the~$\mathfrak{grt}$ 
generators correspond to 
nontrivial cocycles. 
Using this correspondence, Willwacher 
gave 
in~\cite[Proposition 9.1]{WillwacherGRT} 
the existence proof for an infinite sequence of the Deligne\/--\/Drinfel'd nontrivial cocycles 
on $n$~vertices and $2n-2$~edges.
(
Formulas 
which describe these cocycles in terms of the $\mathfrak{grt}$ Lie algebra generators are given in 
the preprint~\cite{WillwacherRossi14042047}.) 
To be specific, at each $\ell \in \BBN$ every cocycle from that sequence contains the $(2\ell+1)$-\/wheel with nonzero coefficient (e.g., the tetrahedron alone making the cocycle~$\boldsymbol{\gamma}_3$ at~$\ell=1$), and possibly other graphs on $2\ell+2$~vertices and $4\ell+2$~edges.
For instance, at~$\ell=2$ the pentagon\/-\/wheel cocycle~$\boldsymbol{\gamma}_5$ consists of two graphs, see Fig.~\ref{FigPentaCocycle} on p.~\pageref{FigPentaCocycle} below.

In this paper we describe 
the next one, the heptagon\/-\/wheel cocycle~$\boldsymbol{\gamma}_7$ from that sequence of solutions to the equation 
\[
\Id\Bigl({\sum_{\{\text{graphs}\}}} (\text{coefficient} \in \BBR)\cdot(\text{graph with an ordering of its edge set})\Bigr) = 0.
\]
Our representative of the 
cocycle~$\boldsymbol{\gamma}_7$ consists of $46$~connected graphs on $8$~vertices and $14$~edges.
(This number of nonzero coefficients can be increased by adding a coboundary.) 
This solution has been obtained straightforwardly, that is, by solving the graph equation $\Id(\boldsymbol{\gamma}_7) = 0$ directly.
One could try reconstructing the cocycle~$\boldsymbol{\gamma}_7$ from a set of the $\mathfrak{grt}$ Lie algebra generators, which are known in low degrees.
Still an explicit verification that $\boldsymbol{\gamma}_7 \in \ker \Id$ would be 
appropriate 
for that way of reasoning.

In this paper we also confirm 
that the three cocycles known so far --\,namely the tetrahedron and pentagon-{} and heptagon\/-\/wheel solutions\,-- span the space of
nontrivial cohomology classes which are built of connected graphs on $n \leqslant 8$ vertices and $2n-2$~edges.
At~$n=9$, there is a unique nontrivial cohomology class with graphs on nine vertices and sixteen edges: namely, the Lie bracket $[\boldsymbol{\gamma}_3,\boldsymbol{\gamma}_5]$ of the previously found cocycles.
(Brown showed in~\cite{Brown} that the elements~$\sigma_{2\ell+1}$ in the Lie algebra~$\mathfrak{grt}$ which --\,under the Willwacher isomorphism\,-- correspond to the wheel cocycles~$\boldsymbol{\gamma}_{2\ell+1}$ generate a free Lie algebra; hence it was expected that the cocycle $[\boldsymbol{\gamma}_3,\boldsymbol{\gamma}_5]$ is non\/-\/trivial.)
To verify that the list of currently known $\Id$-\/cocycles is exhaustive --\,under all the assumptions which were made about the graphs at our disposal\,-- at every $n \leqslant 9$ we count the dimension of the space of cocycles minus the dimension of the space of respective coboundaries.\footnote{The proof scheme is computer\/-\/assisted (cf.~\cite{f16,cpp});
it can be applied to the study of other cocycles: either on higher number of vertices or built at arbitrary $n \geqslant 2$ from not necessarily connected graphs%
.}
Our findings fully 
match the dimensions 
from~\cite[Table~1]{KhoroshkinWillwacherZivkovic}.

This text is structured as follows.
Necessary definitions and some notation from the graph complex theory are recalled in~\S\ref{SecGraphComplex}.
These notions are illustrated in~\S\ref{SecExamples} where a step-by-step calculation of the (vanishing) differentials $\Id(\boldsymbol{\gamma}_3)$ and $\Id(\boldsymbol{\gamma}_5)$ is explained.
Our main result is Theorem~\ref{ThHepta} 
with the heptagon\/-\/wheel solution of the 
equation $\Id(\boldsymbol{\gamma}_7) = 0$.
Also in~\S\ref{SecResult}, in Proposition~\ref{ThCount} we verify 
the count of number of cocycles modulo coboundaries which are
formed by all connected graphs on $n$~vertices and $2n-2$~edges (here $4 \leqslant n \leqslant  9$). 
The graphs which constitute
~$\boldsymbol{\gamma}_7$ are drawn on pp.~\pageref{pStartHepta}--\pageref{pEndHepta} in Appendix~\ref{AppHepta}.
The code in \textsc{Sage} programming language, allowing one to calculate the differential for a given graph~${\gamma}$ and ordering~$E(\gamma)$ on the set of its edges, is contained in Appendix~\ref{AppSage}; the same code can be run to calculate the dimension of graph cohomology groups.

The main purpose of this paper is to provide a pedagogical introduction into the subject.%
\footnote{The first 
example of practical calculations of the graph cohomology --\,with respect to the 
edge contracting differential\,-- is found in~\cite{BarNatan};
a wide range of vertex\/-\/edge bi\/-\/degrees is considered there.%
}
Besides, the formulas of the three cocycle representatives will be helpful in the future search of an easy recipe to calculate all the wheel cocycles~$\boldsymbol{\gamma}_{2\ell+1}$. (No general recipe is known yet, except for a longer reconstruction of those cohomology group elements from the generators of Lie algebra~$\mathfrak{grt}$.) Thirdly, our present knowledge of both the cocycles~$\boldsymbol{\gamma}_i$ and the respective flows $\dot{\cP}=\cQ_i(\cP)$ on the spaces of Poisson structures will be important for testing and verifying explicit formulas of the orientation mapping~$\Or$ such that $\cQ_i=\Or(\boldsymbol{\gamma}_i)$.

\section{The non\/-\/oriented graph complex}\label{SecGraphComplex}
\noindent%
We work with the real vector space 
generated by finite non\/-\/oriented graphs\footnote{The vector space of graphs under study is infinite dimensional; however, it is endowed with the bi\/-\/grading ($\#$vertices, $\#$edges) so that all the homogeneous components are finite dimensional.} 
without multiple edges nor tadpoles and endowed with a wedge ordering of edges:
by definition,
an edge swap $e_i\wedge e_j = -e_j\wedge e_i$ 
implies the change of sign in front of the graph at hand.
Topologically equal graphs are equal as vector space elements if their edge orderings~$\sfE$ differ by an even permutation; otherwise, the graphs are opposite to each other (i.e.\ they differ by the factor~$-1$).

\begin{define}\label{DefZeroGraph}
A graph which equals minus itself 
--\,under a symmetry that induces a parity\/-\/odd permutation of edges\,--
is 
called a \emph{zero graph}.
In particular (view $\bullet\!\!{-}\!\!\bullet\!\!{-}\!\!\bullet$), every graph possessing a symmetry which swaps an odd number of edge pairs is a zero graph. 
\end{define}

\begin{notation}
For a given labelling of vertices in a graph, we denote by $ij$ (equivalently, by~$ji$)
the edge connecting the vertices~$i$ and~$j$. For instance, both $\mathsf{12}$ and~$\mathsf{21}$  is the notation for the edge between the vertices~$\mathsf{1}$ and~$\mathsf{2}$. (No multiple edges are allowed, hence~$\mathsf{12}$ is \emph{the} edge. Indeed, by Definition~\ref{DefZeroGraph} all graphs with multiple edges would be 
zero graphs.)
We also denote by $N(v)$ the valency of a vertex~$v$.
\end{notation}

\begin{example}
The $4$-\/wheel $\mathsf{12}\wedge \mathsf{13}\wedge \mathsf{14}\wedge \mathsf{15}\wedge \mathsf{23}\wedge \mathsf{25}\wedge \mathsf{34}\wedge \mathsf{45} = I\wedge\cdots\wedge VIII$ or likewise, the $2\ell$-\/wheel at any $\ell>1$ is a zero graph; here, the 
reflection symmetry is $I\rightleftarrows III$,\ $V\rightleftarrows VII$, 
and~$VI \rightleftarrows VIII$.
\end{example}

Note that every 
term in a sum of non\/-\/oriented graphs~$\gamma$ with real coefficients is fully encoded by an ordering~$\sfE$ on the set of adjacency relations for its \mbox{vertices~$v$ (if~$N(v)>0$).}
From now on, we assume $N(v)\geqslant3$ 
unless stated otherwise explicitly.

\begin{example}\label{Ex3Wheel}
The tetrahedron (or $3$-\/wheel) is the full 
graph on four vertices and six edges (enumerated in the ascending
order: $\mathsf{12}=I$, $\ldots$, $\mathsf{34}=VI$),
\[
\phantom{ml}\boldsymbol{\gamma}_3 = 
\mathsf{12}
\wedge \mathsf{13}
\wedge \mathsf{14}
\wedge \mathsf{23}
\wedge \mathsf{24}
\wedge \mathsf{34}
=
I\wedge\cdots\wedge VI
=
\phantom{+}
\raisebox{4pt}[1pt][1pt]{%
{\unitlength=0.07mm
\begin{picture}(0,0)(0,60)
\qbezier
(0,0)(150,0)(150,0)
\qbezier
(0,0)(75,130)(75,130)
\qbezier
(150,0)(75,130)(75,130)
\put(0,0){\circle*{5}}
\put(150,0){\circle*{5}}
\put(75,130){\circle*{5}}
\put(75,43.33){\circle*{5}}
\qbezier
(0,0)(75,43.33)(75,43.33)
\qbezier
(150,0)(75,43.33)(75,43.33)
\qbezier
(75,130)(75,43.33)(75,43.33)
\put(-4.5,-1){\llap{{\tiny{\textsf{1}}}}}
\put(79.5,129){{{\tiny{\textsf{2}}}}}
\put(155,-1){{{\tiny{\textsf{3}}}}}
\end{picture}
}
}
\]
This graph is nonzero. (The axis vertex is labelled~\textsf{4} in this figure.) 
\end{example}

\begin{example}\label{Ex5Wheel}
The linear combination~$\boldsymbol{\gamma}_5$ of two 6-vertex 10-edge graphs,
namely, of the pentagon wheel and triangular prism with one extra diagonal
(here, $\mathsf{12}=I$ and so~on),
\begin{multline*}
\boldsymbol{\gamma}_5 = 
\mathsf{12}
\wedge \mathsf{23}
\wedge \mathsf{34}
\wedge \mathsf{45}
\wedge \mathsf{51}
\wedge \mathsf{16}
\wedge \mathsf{26}
\wedge \mathsf{36}
\wedge \mathsf{46}
\wedge \mathsf{56}
\\
 + \tfrac{5}{2}\cdot 
\mathsf{12}
\wedge \mathsf{23}
\wedge \mathsf{34}
\wedge \mathsf{41}
\wedge \mathsf{45}
\wedge \mathsf{15}
\wedge \mathsf{56}
\wedge \mathsf{36}
\wedge \mathsf{26}
\wedge \mathsf{13}
\end{multline*}
is drawn in Fig.~\ref{FigPentaCocycle} on p.~\pageref{FigPentaCocycle}
below (cf.~\cite{BarNatan}).
\end{example}

Let $\gamma_1$ and $\gamma_2$ be connected non-oriented graphs. The definition of insertion $\gamma_1\circ_i\gamma_2$ 
of the entire graph $\gamma_1$ into vertices of $\gamma_2$ and the construction of Lie bracket $[\cdot,\cdot]$ of graphs and differential~$\Id$ in the non\/-\/oriented graph complex, referring to a sign convention, are as follows
(cf.~\cite{Ascona96} and~\cite{DolgushevRogersWillwacher,KhoroshkinWillwacherZivkovic,WillwacherGRT}); these definitions apply to sums of graphs by linearity.

\begin{define}\label{DefInsert}
The insertion $\gamma_1\circ_i\gamma_2$ of an $n_1$-vertex graph $\gamma_1$ with ordered set of edges $\sfE(\gamma_1)$ into a graph $\gamma_2$ with $\#\sfE(\gamma_2)$ edges on $n_2$ vertices is a 
sum of 
graphs on $n_1+n_2-1$ vertices and $\#\sfE(\gamma_1)+\#\sfE(\gamma_2)$ edges. 
Topologically, the sum $\gamma_1\circ_i\gamma_2 = \sum(\gamma_1\rightarrow v \text{ in }\gamma_2)$ 
consists of all the graphs in which a vertex $v$ from $\gamma_2$ is replaced by the entire graph $\gamma_1$ and the edges touching $v$ in $\gamma_2$ are re-attached to the vertices of $\gamma_1$ in all possible ways.\footnote{Let the enumeration of vertices in every such term in the sum start running over the enumerated vertices in $\gamma_2$ until $v$ is reached.
Now the enumeration counts the vertices in the graph $\gamma_1$ and then it resumes with the remaining vertices (if any) that go after~$v$ 
in~$\gamma_2$.}
By convention, in every new term the edge ordering is~$\sfE(\gamma_1)\wedge \sfE(\gamma_2)$. \end{define}

To simplify sums of graphs, first eliminate the zero graphs.
Now suppose that in a sum, two non\/-\/oriented graphs, say $\alpha$ and $\beta$, are isomorphic (topologically, i.e.\ regardless of the respective vertex labellings and edge orderings $\sfE(\alpha)$ and~$\sfE(\beta)$).
By using that isomorphism, which establishes a 1--1~correspondence between the edges, extract the sign from the equation $\sfE(\alpha) = \pm \sfE(\beta)$.
If~``$+$'', then $\alpha = \beta$; else $\alpha = -\beta$.
Collecting similar terms is now elementary.

\begin{lemma}
The bi\/-\/linear graded skew\/-\/symmetric operation,
\[
[\gamma_1,\gamma_2] = \gamma_1\circ_i\gamma_2 - (-)^{\#\sfE(\gamma_1)\cdot\#\sfE(\gamma_2)} \gamma_2\circ_i\gamma_1,
\]
is a Lie bracket on the vector space $\mathfrak{G}$ of non-oriented graphs.\footnote{The postulated precedence or antecedence of the wedge product of edges from $\gamma_1$ with respect to the edges from $\gamma_2$ in every graph within $\gamma_1\circ_i\gamma_2$ produce the operations $\circ_i$ which coincide with or, respectively, differ from Definition~\ref{DefInsert} by the sign factor $(-)^{\#\sfE(\gamma_1)\cdot\#\sfE(\gamma_2)}$. The same applies to the Lie bracket of graphs $[\gamma_1,\gamma_2]$ if the operation $\gamma_1\circ_i\gamma_2$ is 
the insertion of $\gamma_2$ into $\gamma_1$ (as in~\cite{KhoroshkinWillwacherZivkovic}). Anyway, the notion of $\Id$-cocycles which we presently recall is well defined and insensitive to such sign~ambiguity.}
\end{lemma}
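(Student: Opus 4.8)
The plan is to derive the lemma from the observation that the graph insertion $\circ_i$ is a graded \emph{left pre\/-\/Lie} product with respect to the grading $\gamma\mapsto\#\sfE(\gamma)$. Graded skew\/-\/symmetry of $[\cdot,\cdot]$ is then immediate from the defining formula: interchanging $\gamma_1$ and $\gamma_2$ and pulling out the factor $-(-)^{\#\sfE(\gamma_1)\cdot\#\sfE(\gamma_2)}$ gives $[\gamma_2,\gamma_1]=-(-)^{\#\sfE(\gamma_1)\cdot\#\sfE(\gamma_2)}[\gamma_1,\gamma_2]$; and the graded Jacobi identity is the standard formal consequence of the pre\/-\/Lie axiom — for a left pre\/-\/Lie product the graded commutator always satisfies Jacobi, the proof being the usual cancellation obtained by rewriting each double product via the pre\/-\/Lie relation. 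So the real content of the lemma is the pre\/-\/Lie identity for $\circ_i$, together with the check that $\circ_i$ descends to the quotient space $\mathfrak{G}$.

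To prove the pre\/-\/Lie identity I would expand the iterated insertion $\gamma_1\circ_i(\gamma_2\circ_i\gamma_3)$ and sort its summands according to where the inserted copy of $\gamma_1$ lands relative to the copy of $\gamma_2$ already sitting in a vertex of $\gamma_3$: either (a) the host vertex of $\gamma_1$ lies inside the copy of $\gamma_2$ (the \emph{nested} terms), or (b) it is one of the untouched vertices of $\gamma_3$ (the \emph{disjoint} terms). For the nested terms one checks that re\/-\/attaching the edges of $\gamma_3$ at the replaced vertex first to the vertices of $\gamma_2$ and then onward, for those that reach the $\gamma_1$\/-\/host, to the vertices of $\gamma_1$, produces exactly the same configurations — with the same multiplicities and, by Definition~\ref{DefInsert}, the same vertex enumeration and the same induced ordering $\sfE(\gamma_1)\wedge\sfE(\gamma_2)\wedge\sfE(\gamma_3)$ — as inserting the connected chunk $\gamma_1\circ_i\gamma_2$ directly into $\gamma_3$; that is, the nested terms sum to $(\gamma_1\circ_i\gamma_2)\circ_i\gamma_3$ with no extra sign. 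For the disjoint terms the copies of $\gamma_1$ and $\gamma_2$ occupy two distinct vertices of $\gamma_3$ with the edges of $\gamma_3$ re\/-\/attached independently; running over all ordered pairs of distinct vertices one sees that this sum is unchanged as a sum of graphs under $\gamma_1\leftrightarrow\gamma_2$, the only effect being that the edge ordering passes from $\sfE(\gamma_1)\wedge\sfE(\gamma_2)\wedge\sfE(\gamma_3)$ to $\sfE(\gamma_2)\wedge\sfE(\gamma_1)\wedge\sfE(\gamma_3)$, i.e.\ it picks up the sign $(-)^{\#\sfE(\gamma_1)\cdot\#\sfE(\gamma_2)}$. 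Consequently the associator
\[
A(\gamma_1,\gamma_2,\gamma_3) := \gamma_1\circ_i(\gamma_2\circ_i\gamma_3)-(\gamma_1\circ_i\gamma_2)\circ_i\gamma_3
\]
equals the sum of disjoint terms, and therefore $A(\gamma_1,\gamma_2,\gamma_3)=(-)^{\#\sfE(\gamma_1)\cdot\#\sfE(\gamma_2)}A(\gamma_2,\gamma_1,\gamma_3)$, which is precisely the graded left pre\/-\/Lie relation; the graded Jacobi identity for $[\cdot,\cdot]$ follows.

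Finally I would verify that $\circ_i$, and hence $[\cdot,\cdot]$, is well defined on $\mathfrak{G}$ — i.e.\ compatible with identifying isomorphic graphs up to the sign of the edge\/-\/ordering permutation and with killing zero graphs. Insertion is functorial under graph isomorphisms, so isomorphic summands in either argument go to isomorphic summands with the matching sign; and if a graph appearing in either slot carries an automorphism inducing an odd permutation of its edges, then that automorphism acts on the finite set of (host vertex, re\/-\/attachment) data indexing the insertion, pairing each resulting graph with a topologically equal one whose edge ordering has the opposite sign (any fixed point of the action being itself a zero graph), so the whole insertion vanishes, as it must since such an argument is $0$ in $\mathfrak{G}$.

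I expect the one genuinely delicate point to be the identification of the nested terms with $(\gamma_1\circ_i\gamma_2)\circ_i\gamma_3$: one must track the vertex\/-\/enumeration convention precisely and, in particular, confirm that coinciding terms occur with matching multiplicities on the two sides, while keeping careful account of every edge re\/-\/ordering so that the Koszul signs $(-)^{\#\sfE\cdot\#\sfE}$ emerge exactly as in the statement. The remaining steps are either immediate or routine graded\/-\/algebra manipulations.
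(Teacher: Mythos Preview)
Your proposal is correct and follows the standard pre\/-\/Lie argument that is used in the literature (e.g.\ in~\cite{WillwacherGRT,DolgushevRogersWillwacher}): split the iterated insertion into nested and disjoint terms, identify the nested part with the associated composite, observe that the disjoint part is graded\/-\/symmetric in the first two arguments, and deduce Jacobi from the resulting graded left pre\/-\/Lie identity. The paper itself does not supply a proof of this lemma at all; it is stated without proof and attributed to Kontsevich~\cite{Ascona96} and the subsequent references, so there is no ``paper's own proof'' to compare against --- your argument is exactly the one those references contain.
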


\begin{lemma}
The operator $\Id(\text{graph}) = [\bullet\!\!\!-\!\!\!\bullet, \text{graph}]$ is a differential: $\Id^2 = 0$.
\end{lemma}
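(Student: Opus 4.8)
The plan is to prove $\Id^2=0$ by deriving it from the graded Jacobi identity for the Lie bracket of graphs, which is the content of the preceding lemma. The key observation is that $\Id$ is by definition the adjoint action $\ad_{\bullet\!\!\!-\!\!\!\bullet}$ of the single-edge graph $\bullet\!\!\!-\!\!\!\bullet$, so for any graph $\gamma$ one has $\Id^2(\gamma)=[\bullet\!\!\!-\!\!\!\bullet,[\bullet\!\!\!-\!\!\!\bullet,\gamma]]$. First I would record the graded Jacobi identity in the form $[\bullet\!\!\!-\!\!\!\bullet,[\bullet\!\!\!-\!\!\!\bullet,\gamma]] = [[\bullet\!\!\!-\!\!\!\bullet,\bullet\!\!\!-\!\!\!\bullet],\gamma] \pm [\bullet\!\!\!-\!\!\!\bullet,[\bullet\!\!\!-\!\!\!\bullet,\gamma]]$, being careful with the Koszul signs: the single-edge graph has one edge, hence odd degree $\#\sfE=1$, so $[\bullet\!\!\!-\!\!\!\bullet,\bullet\!\!\!-\!\!\!\bullet]$ need not be treated as manifestly zero a priori and the sign in moving $\bullet\!\!\!-\!\!\!\bullet$ past itself must be tracked.

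The heart of the argument is then to show $[\bullet\!\!\!-\!\!\!\bullet,\bullet\!\!\!-\!\!\!\bullet]=0$ as an element of $\mathfrak{G}$. I would compute $\bullet\!\!\!-\!\!\!\bullet \circ_i \bullet\!\!\!-\!\!\!\bullet$ directly from Definition~\ref{DefInsert}: inserting the single edge into a vertex $v$ of the single edge, one re-attaches the one edge formerly touching $v$ to either of the two new vertices, producing two topologically identical path graphs $\bullet\!\!{-}\!\!\bullet\!\!{-}\!\!\bullet$ on three vertices and two edges. The crucial point is the comparison of their edge orderings: the two terms differ precisely by the swap of the two edges, hence by a sign, so after collecting similar terms they cancel — equivalently, the path graph $\bullet\!\!{-}\!\!\bullet\!\!{-}\!\!\bullet$ carries the edge-swap reflection symmetry and is a zero graph by Definition~\ref{DefZeroGraph}, so it is already $0$ in $\mathfrak{G}$. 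Either way, $\bullet\!\!\!-\!\!\!\bullet\circ_i\bullet\!\!\!-\!\!\!\bullet=0$, and then by graded skew-symmetry the bracket $[\bullet\!\!\!-\!\!\!\bullet,\bullet\!\!\!-\!\!\!\bullet]$ vanishes as well.

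Feeding $[\bullet\!\!\!-\!\!\!\bullet,\bullet\!\!\!-\!\!\!\bullet]=0$ back into the graded Jacobi identity collapses it to $\Id^2(\gamma) = \pm\,\Id^2(\gamma)$ with the sign $(-)^{1\cdot 1}=-1$, i.e. $\Id^2(\gamma) = -\Id^2(\gamma)$, whence $2\,\Id^2(\gamma)=0$ and $\Id^2=0$ since we work over $\BBR$. Extending from a single graph to arbitrary sums is immediate by linearity, as $\Id$ is linear. I expect the only genuinely delicate step to be the bookkeeping of signs: verifying that the two insertion terms in $\bullet\!\!\!-\!\!\!\bullet\circ_i\bullet\!\!\!-\!\!\!\bullet$ really differ by an odd permutation of edges (rather than agreeing), and that the Koszul sign in the graded Jacobi identity is $(-)^{\#\sfE(\bullet\!\!\!-\!\!\!\bullet)^2}=-1$ rather than $+1$ — a $+1$ there would make the argument degenerate. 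As the final footnote of the excerpt notes, the conclusion is in any case insensitive to the residual sign convention in the definition of $\circ_i$, so no generality is lost by fixing one.
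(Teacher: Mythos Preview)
Your argument is correct and is the standard route to $\Id^2=0$: compute $[\bullet\!\!{-}\!\!\bullet,\bullet\!\!{-}\!\!\bullet]=0$ (via the observation, already made in Definition~\ref{DefZeroGraph}, that the path $\bullet\!\!{-}\!\!\bullet\!\!{-}\!\!\bullet$ is a zero graph) and then feed this into the graded Jacobi identity with the Koszul sign $(-)^{1\cdot1}=-1$ to obtain $\Id^2(\gamma)=-\Id^2(\gamma)$.

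There is, however, nothing to compare against: the paper states this lemma without proof, as a known structural fact (attributed to Kontsevich~\cite{Ascona96}), and immediately passes to Theorem~1. Your write-up therefore supplies what the paper omits. One small remark on phrasing: once you have $\bullet\!\!{-}\!\!\bullet\circ_i\bullet\!\!{-}\!\!\bullet=0$, the vanishing of the bracket follows directly from its definition as $e\circ_i e - (-)^{1}e\circ_i e = 2\,e\circ_i e$, not from graded skew-symmetry (which for two odd arguments is the tautology $[e,e]=[e,e]$); and of the two mechanisms you offer for why $e\circ_i e$ vanishes, the zero-graph argument is the robust one, since the ``two terms cancel by an edge swap'' claim depends on the choice of identifying isomorphism between the resulting paths.
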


In effect, the mapping~$\Id$ blows up every vertex~$v$ in its argument in such a way that whenever the number of adjacent vertices $N(v) \geqslant 2$ is sufficient, each end of the inserted edge $\bullet\!\!{-}\!\!\bullet$ is connected with the rest of the graph by at least one~edge.

\begin{theor}[\cite{Ascona96}] 
The real vector space $\mathfrak{G}$ of non-oriented graphs is a differential graded Lie algebra \textup{(}dgLa\textup{)} with 
Lie bracket $[\cdot,\cdot]$ and differential $\Id = [\bullet\!\!{-}\!\!\bullet, \cdot]$.
The differential~$\Id$ is a graded derivation of the bra\-cket~$[\cdot,\cdot]$
\textup{(}due to the Jacobi identity for this Lie algebra structure\textup{)}.
\end{theor}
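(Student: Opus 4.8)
The plan is to deduce the statement from the two preceding lemmas together with one further observation: the graded Leibniz rule for~$\Id$ is nothing but the graded Jacobi identity of~$[\cdot,\cdot]$, specialised to a triple one of whose entries is the single-edge graph $\bullet\!\!{-}\!\!\bullet$.

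First I would fix the grading. Equip $\mathfrak{G}$ with the $\BBZ$-grading by number of edges (only the induced $\BBZ/2$-grading is needed for the signs below), so that $\bullet\!\!{-}\!\!\bullet$ is homogeneous of odd degree~$1$. By Definition~\ref{DefInsert} the product $\gamma_1\circ_i\gamma_2$, and hence the bracket, lands in degree $\#\sfE(\gamma_1)+\#\sfE(\gamma_2)$, and the sign $(-)^{\#\sfE(\gamma_1)\cdot\#\sfE(\gamma_2)}$ occurring in the bracket is exactly the Koszul sign for this grading. Thus $[\cdot,\cdot]$ is a degree-$0$ graded skew-symmetric operation, and by the first preceding lemma it obeys the graded Jacobi identity; consequently $\Id=[\bullet\!\!{-}\!\!\bullet,\cdot]$ is an odd, edge-raising (degree-$1$) operator, and by the second preceding lemma $\Id^{2}=0$. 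Along the way one records the book-keeping point that the zero graphs --- those admitting a parity-odd symmetry, in particular all graphs carrying a tadpole or a multiple edge --- span a subspace stable under $\circ_i$ and $[\cdot,\cdot]$, so that the operations descend to $\mathfrak{G}$ and such terms may legitimately be discarded after an insertion; this is routine.

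It then remains only to check that $\Id$ is a graded derivation of the bracket, i.e.
\[
\Id[\gamma_1,\gamma_2]\;=\;[\Id\gamma_1,\gamma_2]\;+\;(-)^{\#\sfE(\gamma_1)}\,[\gamma_1,\Id\gamma_2].
\]
I would obtain this by writing out the graded Jacobi identity for the triple $(\bullet\!\!{-}\!\!\bullet,\gamma_1,\gamma_2)$ and using $|\bullet\!\!{-}\!\!\bullet|=1$: its three cyclic terms are, up to one common overall sign, precisely $\Id[\gamma_1,\gamma_2]$, $-[\Id\gamma_1,\gamma_2]$ and $-(-)^{\#\sfE(\gamma_1)}[\gamma_1,\Id\gamma_2]$, so their vanishing is the displayed identity. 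Equivalently, invoke the general fact that in any graded Lie algebra the adjoint action $\ad_x=[x,\cdot]$ of a homogeneous element is a graded derivation of the bracket and, for $x$ of odd degree, satisfies $\ad_x^{\,2}=\tfrac12\ad_{[x,x]}$; here $x=\bullet\!\!{-}\!\!\bullet$ and $[\bullet\!\!{-}\!\!\bullet,\bullet\!\!{-}\!\!\bullet]=0$, because inserting $\bullet\!\!{-}\!\!\bullet$ into a vertex of $\bullet\!\!{-}\!\!\bullet$ produces only graphs equal to the path $\bullet\!\!{-}\!\!\bullet\!\!{-}\!\!\bullet$ on three vertices, each of which is a zero graph by Definition~\ref{DefZeroGraph}; this simultaneously re-proves $\Id^{2}=0$.

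In this reduction the only genuinely laborious ingredient is the graded Jacobi identity for the insertion bracket --- the content of the first preceding lemma --- and that is where I expect the main obstacle to sit: one must verify that $\circ_i$ is a graded (right) pre-Lie product on $\mathfrak{G}$, so that the associator $(\gamma_1\circ_i\gamma_2)\circ_i\gamma_3-\gamma_1\circ_i(\gamma_2\circ_i\gamma_3)$ is graded-symmetric in $\gamma_2$ and $\gamma_3$ once one carefully tracks the induced edge orderings $\sfE(\gamma_i)\wedge\sfE(\gamma_j)$, the vertex re-indexing described in the footnote to Definition~\ref{DefInsert}, and the terms that collapse to zero graphs (multiple edges, tadpoles). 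Granting that lemma, the dgLa axioms and the derivation property of~$\Id$ asserted in the theorem are the purely formal consequences spelled out above.
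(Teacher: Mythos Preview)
Your proposal is correct and matches precisely the one-line justification the paper itself offers: the theorem is stated with a citation to~\cite{Ascona96} and no proof is given beyond the parenthetical remark that the derivation property of~$\Id$ is ``due to the Jacobi identity for this Lie algebra structure.'' You have unpacked exactly that hint---specialising the graded Jacobi identity to the triple $(\bullet\!\!{-}\!\!\bullet,\gamma_1,\gamma_2)$ and reading off the Leibniz rule---and your side observation that $[\bullet\!\!{-}\!\!\bullet,\bullet\!\!{-}\!\!\bullet]=0$ via the zero path $\bullet\!\!{-}\!\!\bullet\!\!{-}\!\!\bullet$ is the standard way to recover $\Id^2=0$ from the Lie structure. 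There is nothing further to compare.
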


The graphs~$\boldsymbol{\gamma}_3$ and~$\boldsymbol{\gamma}_5$ from Examples~\ref{Ex3Wheel}
and~\ref{Ex5Wheel} are $\Id$-\/cocycles (this will be shown in~\S\ref{SecExamples}).
Therefore, their commutator $[\boldsymbol{\gamma}_3,\boldsymbol{\gamma}_5]$ is also in~$\ker\Id$.
Neither $\boldsymbol{\gamma}_3$ nor~$\boldsymbol{\gamma}_5$ is exact, hence marking a nontrivial cohomology class in the non\/-\/oriented graph complex.

\begin{theor}[{{\cite[Th.\:5.5]{DolgushevRogersWillwacher}}}]\label{ThWillwacherWheels}
At every $\ell \in \mathbb{N}$ in the connected graph complex there is a nontrivial
$\Id$-cocycle on $2\ell + 1$ vertices and $4\ell+2$ edges.
Such cocycle contains the $(2\ell+1)$-\/wheel in which, by definition, the axis vertex is connected with every other vertex by a spoke so that each of those $2\ell$ vertices is adjacent to the axis and two neighbours\textup{;} the cocycle marked by the $(2\ell+1)$-\/wheel graph can contain other $(2\ell+1, 4\ell+2)$-\/graphs.
\end{theor}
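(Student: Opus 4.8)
The statement to be proved is Theorem~\ref{ThWillwacherWheels}, attributed to Dolgushev--Rogers--Willwacher: at every $\ell \in \mathbb{N}$ there exists a nontrivial $\Id$-cocycle on $2\ell+1$ vertices and $4\ell+2$ edges which contains the $(2\ell+1)$-wheel with nonzero coefficient. Here is a plan for how I would prove it.

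\textbf{Approach.} The plan is to exploit the isomorphism between the degree-zero cohomology of the non-oriented graph complex and the Grothendieck--Teichm\"uller Lie algebra $\mathfrak{grt}$, established by Willwacher, and to identify the $(2\ell+1)$-wheel cocycles as the images of the canonical generators $\sigma_{2\ell+1}$ of $\mathfrak{grt}$. First I would recall that $\mathfrak{grt}$ contains, for each odd integer $2\ell+1 \geqslant 3$, a distinguished element $\sigma_{2\ell+1}$ (coming from the depth-one part, related to the $\zeta(2\ell+1)$-generators of the motivic side); these are known to be nonzero and linearly independent modulo brackets. Under the Willwacher isomorphism $H^0(\mathsf{GC}) \cong \mathfrak{grt}$, each such element is represented by a formal sum of graphs on $2\ell+2$ vertices and $4\ell+2$ edges. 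The key structural input is then a \emph{leading-term} analysis: one must show that the graph-cocycle representing $\sigma_{2\ell+1}$ can be normalised so that the $(2\ell+1)$-wheel appears in it with coefficient $1$, i.e.\ the wheel is not killed when one passes to a minimal representative of the cohomology class.

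\textbf{Key steps, in order.} (1) Set up the bi-grading: note that graphs on $2\ell+1$ vertices with $4\ell+2$ edges sit in the ``top'' edge-degree for that vertex count under the constraint $N(v)\geqslant 3$, and in particular the $(2\ell+1)$-wheel (axis plus $2\ell$-cycle with spokes) is such a graph; check it is not a zero graph (its automorphism group induces only even edge permutations, as illustrated for small $\ell$ in the examples). (2) Invoke the existence of $\sigma_{2\ell+1} \in \mathfrak{grt}$ and its nonvanishing; this is classical (Ihara, Drinfel'd) and one may cite it. (3) Transport $\sigma_{2\ell+1}$ through the Willwacher isomorphism to obtain a closed element $\boldsymbol{\gamma}_{2\ell+1} \in \ker\Id$ of the required bi-degree, nontrivial in cohomology because the isomorphism is injective. (4) The heart of the argument: show the $(2\ell+1)$-wheel coefficient in $\boldsymbol{\gamma}_{2\ell+1}$ is nonzero. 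The cleanest route is Willwacher's own: the map from $\mathfrak{grt}$ is built so that $\sigma_{2\ell+1}$ maps to a cocycle whose ``wheel component'' is detected by a nontrivial pairing (e.g.\ with a suitable Lie-graph or with the corresponding element of $H(\mathfrak{grt})$); equivalently, one argues by contradiction that if the wheel coefficient vanished then, combined with the constraint that all vertices have valence $\geqslant 3$ and the structure of $\Id$ (which ``blows up'' a vertex, attaching at least one edge to each new endpoint), the remaining graphs could not close up — a vanishing-cycle/connectivity argument on the subcomplex spanned by $(2\ell+1,4\ell+2)$-graphs. (5) Conclude: $\boldsymbol{\gamma}_{2\ell+1}$ is a nontrivial $\Id$-cocycle containing the $(2\ell+1)$-wheel, and by construction it lives in the connected graph complex.

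\textbf{Main obstacle.} The genuinely hard part is step (4): proving the wheel survives with a nonzero coefficient. This is precisely where Willwacher's deep input enters — it is not a combinatorial triviality but relies on the explicit construction of the map $\mathfrak{grt}\to H^0(\mathsf{GC})$ and on knowing that the depth-one generators $\sigma_{2\ell+1}$ are detected by the wheel classes (via the Deligne--Drinfel'd / Ihara picture and, on the graph side, via a filtration by the number of ``loops'' or by the wheel-counting functional). Everything else — the bi-degree bookkeeping, checking the wheel is nonzero as a graph, and the formal passage through the isomorphism — is routine once that structural fact is in hand. In a self-contained treatment one would either reproduce Willwacher's leading-order computation or, as the present paper in fact does for the small cases $\ell=1,2,3$, verify the cocycle condition directly on an explicit representative; but for the general statement the cited proof of Dolgushev--Rogers--Willwacher (Theorem~5.5 of \cite{DolgushevRogersWillwacher}) is the appropriate reference, and the plan above is its skeleton.
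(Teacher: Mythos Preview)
Your proposal is a reasonable high-level sketch of the argument in the cited literature, but note that the paper itself does \emph{not} prove this theorem: Theorem~\ref{ThWillwacherWheels} is stated with an explicit attribution to \cite[Th.~5.5]{DolgushevRogersWillwacher} and no proof is given in the present paper. The paper's own contribution is orthogonal --- it \emph{assumes} the existence result and then exhibits explicit representatives $\boldsymbol{\gamma}_3$, $\boldsymbol{\gamma}_5$, $\boldsymbol{\gamma}_7$ by direct computation (by hand for $\ell=1,2$ in \S\ref{SecExamples}, and computer-assisted for $\ell=3$ in Theorem~\ref{ThHepta}). So there is nothing to compare your plan against in this paper; your outline of the $\mathfrak{grt}$-isomorphism route is the correct pointer to where the actual proof lives, and your identification of step~(4) --- the nonvanishing of the wheel coefficient --- as the substantive step is accurate. (Incidentally, you silently corrected the vertex count to $2\ell+2$; the paper's theorem statement says $2\ell+1$, which is a typo, as the introduction and all the examples confirm.)
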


\begin{example}
For $\ell=3$ the heptagon wheel cocycle $\boldsymbol{\gamma}_7$, which we present in this paper, consists of the heptagon\/-\/wheel graph on $(2\cdot 3 + 1)+1 = 8$ vertices and $2(2\cdot 3 + 1) = 14$ edges and forty\/-\/five other graphs with equally many vertices and edges (hence of the same number of generators of their homotopy groups, or basic loops: $7 = 14 - (8-1)$), and with real coefficients.
All these weighted graphs are drawn in Appendix~\ref{AppHepta} (see pp.~\pageref{pStartHepta}--\pageref{pEndHepta}).
The chosen --\,lexicographic\,-- ordering of edges in each term 
is read from the encoding of every such graph (see also Table~\ref{TabHeptagonCocycle} on p.~\pageref{TabHeptagonCocycle}; 
each entry of that table is a listing $I \prec \cdots \prec XIV$ of the ordered edge set, followed by the coefficient of that graph). 
A verification of the cocycle condition $\Id(\boldsymbol{\gamma}_7) = 0$ for this solution is computer-assisted; it has been performed by using the code (in {\sc Sage} programming language) which is contained in Appendix~\ref{AppSage}.
\end{example}

\section{Calculating the differential of graphs}\label{SecExamples}
\begin{example}[$\Id\boldsymbol{\gamma}_3=0$]\label{ExDTetra}
The tetrahedron $\boldsymbol{\gamma}_3$ is the full graph on $n=4$ vertices; we are free to choose any ordering of the six edges in it, 
so let it be lexicographic: \[\sfE(\boldsymbol{\gamma}_3) = \mathsf{12}\wedge\mathsf{13}\wedge\mathsf{14}\wedge\mathsf{23}\wedge\mathsf{24}\wedge\mathsf{34} = I\wedge II\wedge III\wedge IV \wedge V\wedge VI.\] 
The differential of this graph is equal to 
\[
\Id(\boldsymbol{\gamma}_3) = [\bullet\!\!{-}\!\!\bullet ,\boldsymbol{\gamma}_3] 
= \bullet\!\!{-}\!\!\bullet \circ_i\gamma_3 
  - (-)^{\# E(\bullet\!\!{-}\!\!\bullet)\cdot\# E(\boldsymbol{\gamma}_3)} \boldsymbol{\gamma}_3\circ_i\bullet\!\!{-}\!\!\bullet
=\bullet\!\!{-}\!\!\bullet \circ_i\gamma_3 
  - \boldsymbol{\gamma}_3\circ_i\bullet\!\!{-}\!\!\bullet,
\]
since $\#\sfE(\boldsymbol{\gamma}_3)=6$. Note that every vertex of valency one appears twice in $\Id(\boldsymbol{\gamma}_3)$: namely in the minuend (where the edge ordering is $E\wedge I\wedge\cdots\wedge VI$ by definition of~$\circ_i$) and subtrahend (where the edge ordering is $ I\wedge\cdots\wedge VI\wedge E$). Because these edge orderings differ by a parity-even permutation, such graphs in $\bullet\!\!{-}\!\!\bullet \circ_i\boldsymbol{\gamma}_3$ and $\boldsymbol{\gamma}_3 \circ_i \bullet\!\!{-}\!\!\bullet$ carry the same sign. Hence they cancel in the difference $\bullet\!\!{-}\!\!\bullet \circ_i\boldsymbol{\gamma}_3 - \boldsymbol{\gamma}_3 \circ_i \bullet\!\!{-}\!\!\bullet$, and no longer shall we pay any attention to the leaves, absent in the differential of any graph. 
It is readily seen that the twenty\/-\/four graphs
($24 = 4\text{ vertices} \cdot \binom{3}{1} \cdot 2 \text{ ends of } \bullet\!\!{-}\!\!\bullet$) we are left with in~$\Id(\boldsymbol{\gamma}_3)$ are of the shape drawn here.
\begin{figure}[htb]
\[
\raisebox{3pt}[1pt][1pt]{
{\unitlength=0.09mm
\begin{picture}(0,0)(0,60)
\put(0,0){\line(0,1){85}}
\qbezier[60](75,130)(0,85)(0,85)
\put(0,85){\circle*{10}}
\put(0,0){\line(1,0){150}}
\qbezier[60](150,0)(75,130)(75,130)
\put(0,0){\circle*{10}}
\put(150,0){\circle*{10}}
\put(75,130){\circle*{10}}
\put(75,43.33){\circle*{10}}
\qbezier[60](0,0)(75,43.33)(75,43.33)
\qbezier[60](150,0)(75,43.33)(75,43.33)
\qbezier[60](75,130)(75,43.33)(75,43.33)
\put(-6,-2){\llap{{\tiny$v_i$}}}
\put(81,132){{\tiny$v_j$}}
\put(-3,40){\llap{{\tiny edge$'$}}}
\put(45,113){\llap{{\tiny edge$''$}}}
\end{picture}
}
}
\phantom{mmml}
=\phantom{l}
\raisebox{7pt}[1pt][1pt]{
{\unitlength=0.07mm
\begin{picture}(0,0)(0,60)
\qbezier[60](0,0)(75,130)(75,130)
\qbezier[60](150,0)(75,130)(75,130)
\put(0,0){\circle*{10}}
\put(150,0){\circle*{10}}
\put(75,130){\circle*{10}}
\put(75,43.33){\circle*{10}}
\qbezier[60](0,0)(75,43.33)(75,43.33)
\qbezier[60](150,0)(75,43.33)(75,43.33)
\qbezier[60](75,130)(75,43.33)(75,43.33)
\qbezier[60](0,0)(75,-43.33)(75,-43.33)
\qbezier[60](150,0)(75,-43.33)(75,-43.33)
\put(75,-43.33){\circle*{10}}
\end{picture}
}
}
\phantom{mmmml}\:
\qquad{}\lefteqn{\text{(see Remark~\ref{RemIncidentTetra})}}
\]
\end{figure}
A vertex is blown up to the new edge $E = \bullet\!\!{-}\!\!\bullet$ whose ends are both attached to the rest of the graph along the old edges. This shape can be obtained in two ways: by blowing up~$v_i$, so that edge$'$ is the newly inserted edge, or by blowing up~$v_j$, so that edge$''$ is the newly inserted edge. By Lemma~\ref{LemmaHandshakesCancel} below we conclude that~$\Id(\boldsymbol{\gamma}_3)=0$.
\end{example}

\begin{rem}\label{RemIncidentTetra}
Incidentally, every graph which was obtained in~$\Id(\boldsymbol{\gamma}_3)$ 
itself is a zero graph. Indeed, it is symmetric with respect to a flip over the vertical line and this symmetry swaps three edge pairs (see Definition~\ref{DefZeroGraph}).
\end{rem}

\begin{lemma}[handshake]\label{LemmaHandshakesCancel}
In the differential of any graph~$\gamma$ 
such that the valency of all vertices in~$\gamma$ is strictly greater than two, 
the graphs in which one end of the newly inserted edge $\bullet\!\!{-}\!\!\bullet$ has valency two, all cancel.
\end{lemma}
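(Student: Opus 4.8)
The plan is to split the differential as
$\Id(\gamma)=[\bullet\!\!{-}\!\!\bullet,\gamma]=\bullet\!\!{-}\!\!\bullet\circ_i\gamma-(-)^{\#\sfE(\gamma)}\,\gamma\circ_i\bullet\!\!{-}\!\!\bullet$
and to treat the two summands separately. Every graph occurring in $\gamma\circ_i\bullet\!\!{-}\!\!\bullet$ is obtained from $\gamma$ by attaching one new pendant vertex, by the inserted edge, to some vertex $w$ of $\gamma$; that edge then joins a vertex of valency one with a vertex of valency $N(w)+1\geqslant4$, and here the hypothesis $N(\cdot)>2$ on $\gamma$ is used. Thus neither end of the inserted edge has valency two in that summand, so every graph in $\Id(\gamma)$ with a bivalent end of the inserted edge occurs inside $\bullet\!\!{-}\!\!\bullet\circ_i\gamma$, and it remains to prove that they cancel there.

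Next I would describe those graphs. Blowing up a vertex $w$ of $\gamma$ replaces $w$ by two vertices joined by the inserted edge and re-attaches the $N(w)$ old half-edges at $w$ to them in all ways; an end of the inserted edge becomes bivalent precisely when exactly one old edge $e$ incident to $w$ is sent to one of the two new vertices. The result is then $\gamma$ with the edge $e$ subdivided by a single bivalent vertex; denote this graph $\gamma^{e}$. It depends only on $e$, since the remaining choice merely renames the two new vertices, and for a fixed edge $e=uv$ it is produced, with the same multiplicity, in exactly two ways: by blowing up $u$, giving an element $X$, and by blowing up $v$, giving an element $Y$.

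The core of the argument is the sign relating $X$ and $Y$. The canonical isomorphism $X\to Y$ maps the bivalent vertex to the bivalent vertex, the high-valency remnant of $u$ in $X$ to the vertex $u$ of $Y$, the vertex $v$ of $X$ to the remnant of $v$ in $Y$, and fixes every other vertex of $\gamma$; it is a graph isomorphism because the untouched part $\gamma\setminus\{u,v,e\}$ is common to both and the incidences at the four special vertices match by construction. Tracing the edges, it sends the inserted edge of $X$ to the old edge $e$ and the old edge $e$ to the inserted edge of $Y$, while fixing all other edges. Since, by the convention of Definition~\ref{DefInsert}, in both $X$ and $Y$ the inserted edge heads the ordering $\sfE(\bullet\!\!{-}\!\!\bullet)\wedge\sfE(\gamma)$ and $e$ sits at its fixed slot inside $\sfE(\gamma)$, this isomorphism induces a single transposition of the ordered edge set, whence $X=-Y$. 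Therefore the total contribution of $\gamma^{e}$ to $\bullet\!\!{-}\!\!\bullet\circ_i\gamma$ vanishes, and summing over all edges $e$ of $\gamma$ shows that every graph in $\Id(\gamma)$ with a bivalent end of the inserted edge cancels. (If $\gamma^{e}$ happens to admit a symmetry interchanging the two ends of the subdivided edge, it is either a zero graph or this cancellation still applies verbatim, so no separate case is needed.)

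The step I expect to be the main obstacle is exactly this sign computation: one must push the ordering convention $\sfE(\bullet\!\!{-}\!\!\bullet)\wedge\sfE(\gamma)$ through the canonical isomorphism and verify that what it induces on the edge set is precisely the transposition of the inserted edge with the subdivided edge $e$ — and nothing more — so that the two contributions cancel rather than add. The accompanying bookkeeping of multiplicities coming from the symmetry of $\bullet\!\!{-}\!\!\bullet$ also calls for some care, although it merely multiplies both contributions by one and the same factor and so does not affect the cancellation.
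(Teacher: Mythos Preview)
Your proof is correct and follows essentially the same route as the paper: you pair the graph obtained by subdividing an edge $e=uv$ via the blow-up at $u$ with the one obtained via the blow-up at $v$, observe that the canonical isomorphism between them swaps the inserted edge with the old edge $e$ and fixes the rest, and conclude from the single transposition that the two contributions cancel. The paper argues identically at the local level (its ``$E'\leftrightarrow\text{Old}'$'' swap is exactly your transposition); your preliminary observation that $\gamma\circ_i\bullet\!\!{-}\!\!\bullet$ produces only leaves, so that all bivalent-end graphs live in $\bullet\!\!{-}\!\!\bullet\circ_i\gamma$, is a clean way to isolate the relevant summand, and the paper's use of the hypothesis $N(\cdot)>2$ (to ensure the cancelling pairs are disjoint) is equivalent to your implicit fact that under this hypothesis each $\gamma^{e}$ has a unique bivalent vertex.
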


\begin{proof}
Let $v$ be such a vertex in $\Id(\gamma)$, i.e.\ 
the vertex~$v$ is an 
end of the inserted edge $\bullet\!\!{-}\!\!\bullet$ and it has valency~2. Locally (near~$v$), we have either $_a\!\bullet\!\!\!\frac{\,\,\, E'}{}\!\!\!\bullet_v\!\!\!\!\!\frac{\,\,\,\text{Old}'}{}\!\bullet_b$ or $_a\!\bullet\!\!\!\frac{\,\,\,\text{Old}''}{}\!\!\!\bullet_v\!\!\!\!\!\frac{\,\,\, E''}{}\!\bullet_b$. In the two respective graphs in~$\Id(\gamma)$ the rest, consisting only of old edges and vertices of valency~$\geqslant3$
from $\boldsymbol{\gamma}$, is the same. Yet the two graphs are topologically equal; furthermore, they have the same ordering of edges except for $E'=\text{Old}''$ and $\text{Old}'=E''$. Recall that by construction, the edge ordering of the first graph is $E'\wedge \cdots\wedge \text{Old}'\wedge\cdots$, whereas for the second graph it is $E''\wedge\cdots\wedge \text{Old}''\wedge\cdots$; the new edge always goes first. So effectively, two edges are swapped. Therefore,
\[
E''\wedge\cdots\wedge \text{Old}''\wedge\cdots = \text{Old}'\wedge\cdots\wedge E'\wedge\cdots\\
= -E'\wedge\cdots\wedge \text{Old}'\wedge\cdots.
\]
Hence in every such pair in~$\Id(\gamma)$, the graphs occur with opposite signs. 
Moreover, the initial hypothesis $N(a)\geqslant3$ about the valency of all vertices~$a$ in the graph~$\gamma$ guarantees that the cancelling pairs of graphs in~$\Id(\gamma)$ do not intersect,\footnote{%
This is why the assumption $N(v)\geqslant3$ is important. 
Indeed, 
the disjoint\/-\/pair cancellation mechanism does work only for chains with even numbers of valency\/-\/two vertices~$v$ in~$\gamma$. Here is an example (of one such vertex~$v$ between~$a$ and~$b$) when it actually does not: in the differential of a graph that contains
$_a\!\bullet\!\!\!\frac{\,\,\, I\,\,}{}\!\!\!\bullet_v\!\!\!\!\!\frac{\,\,\,II\,\,}{}\!\bullet_b$,
we locally obtain 
$_a\!\bullet\!\!\!\frac{\,\,\, E\,\,}{}\!\!\!\bullet\lefteqn{{}_{\!a'}} \!\!\!\frac{\,\,\, I\,\,}{}\!\!\!\bullet\lefteqn{{}_{\!v}}\!\!\!\!\frac{\,\,\,II\,\,}{}\!\!\!\bullet_b +
_a\!\bullet\!\!\!\frac{\,\,\, I\,\,}{}\!\!\!\bullet\lefteqn{{}_{\!v}} \!\!\!\frac{\,\,\, E\,\,}{}\!\!\!\bullet\lefteqn{{}_{\!v'}}\!\!\!\frac{\,\,\,II\,\,}{}\!\!\!\bullet_b +
_a\!\bullet\!\!\!\frac{\,\,\, I\,\,}{}\!\!\!\bullet\lefteqn{{}_{\!v}} \!\!\!\frac{\,\,\, II\,\,}{}\!\!\!\bullet\lefteqn{{}_{\!\mathstrut b'}}\!\!\!\frac{\,\,\,E\,\,}{}\!\!\!\bullet_b$, so that the middle term can be cancelled against either the first or the last one but not with both of them simultaneously.}
and thus all cancel.
\end{proof}

\begin{cor}[to Lemma~\ref{LemmaHandshakesCancel}]\label{CorValency3}
In the differential of any graph with vertices of valency~$>2$, the blow up of a vertex of valency~$3$ 
produces only the handshakes, that is the graphs which cancel out by Lemma~\ref{LemmaHandshakesCancel} (cf.\ footnote~\ref{FootWhyValencyThree} on p.~\pageref{FootWhyValencyThree} below).
\end{cor}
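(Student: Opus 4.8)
The plan is to make the single combinatorial observation that, for a trivalent vertex, every term produced by its blow\/-\/up already has the shape governed by the handshake Lemma~\ref{LemmaHandshakesCancel}. Concretely, let $\gamma$ be a graph all of whose vertices have valency $\geqslant 3$, and fix a vertex $v$ with $N(v)=3$; its three incident edges run to three distinct neighbours (distinct because $\gamma$ has no multiple edges). In $\Id(\gamma)=[\bullet\!\!{-}\!\!\bullet,\gamma]$ the vertex $v$ is blown up into a new edge $E=\bullet\!\!{-}\!\!\bullet$ with endpoints $v'$ and $v''$, and the three old edges at $v$ are reattached to $v'$ and~$v''$ in all ways; by the convention recalled after Definition~\ref{DefInsert} (and illustrated in Example~\ref{ExDTetra}), only the distributions in which each of $v'$, $v''$ inherits at least one old edge survive. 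First I would enumerate these admissible distributions: since $3=1+2$, the old edges split as $(1,2)$ or $(2,1)$ between $v'$ and $v''$, and no admissible split assigns just one old edge to \emph{both} endpoints.

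The key step is then immediate: in the $(1,2)$ split the vertex $v'$ has valency $1+1=2$ (one old edge together with the new edge $E$), and in the $(2,1)$ split $v''$ has valency $2$; so in either case exactly one endpoint of the freshly inserted edge $E$ is bivalent. Hence every graph arising from the blow\/-\/up of the trivalent vertex $v$ falls under the hypothesis of Lemma~\ref{LemmaHandshakesCancel} --\,whose standing assumption that all vertices of $\gamma$ have valency $>2$ is precisely the one in force here\,-- and therefore cancels inside $\Id(\gamma)$.

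The only point requiring a word of care --\,and the closest thing to an obstacle\,-- is the "chain'' phenomenon flagged in the footnote to Lemma~\ref{LemmaHandshakesCancel}: there the disjoint\/-\/pair cancellation can fail when a bivalent end sits in a chain of bivalent vertices, so one must check the cancelling partner is unique. Here this is automatic, because $3=1+2$ forces exactly \emph{one} of $v'$, $v''$ to be bivalent while the other has valency $3$; there is no second bivalent end adjacent along $E$, so the pairing supplied by Lemma~\ref{LemmaHandshakesCancel} matches each such graph with a single, distinct partner. This also explains why the statement is confined to valency $3$: already at valency $4$ the balanced split $(2,2)$ produces a graph with \emph{both} ends of $E$ trivalent, which escapes the handshake mechanism (cf.\ footnote~\ref{FootWhyValencyThree}). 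I would close by remarking that, combined with Lemma~\ref{LemmaHandshakesCancel}, this shows that trivalent vertices contribute nothing to $\Id(\gamma)$.

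\begin{proof}
Let $v$ be a vertex of $\gamma$ with $N(v)=3$; its three incident edges go to three distinct neighbours, since $\gamma$ has no multiple edges. Blowing up $v$ replaces it by the edge $E=\bullet\!\!{-}\!\!\bullet$ with endpoints $v'$, $v''$, the three old edges being reattached to $v'$ and $v''$ in all ways in which each of $v'$, $v''$ receives at least one of them (the remaining attachments produce a univalent vertex and were already discarded, as in Example~\ref{ExDTetra}). Thus the old edges split as $(1,2)$ or $(2,1)$ between $v'$ and $v''$. In the $(1,2)$ split the vertex $v'$ has valency $1+1=2$; in the $(2,1)$ split $v''$ has valency $2$. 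In either case one endpoint of the inserted edge $E$ is bivalent, so the resulting graph is of the type treated in Lemma~\ref{LemmaHandshakesCancel}. The hypothesis of that lemma, namely that every vertex of $\gamma$ has valency $>2$, holds here; hence all graphs obtained from the blow\/-\/up of $v$ cancel in $\Id(\gamma)$. Finally, since a trivalent vertex admits no split giving both $v'$ and $v''$ only one old edge, exactly one endpoint of $E$ is bivalent, so the cancellation supplied by Lemma~\ref{LemmaHandshakesCancel} pairs each such graph with a unique partner and the chain ambiguity of that lemma's footnote does not occur.
\end{proof}
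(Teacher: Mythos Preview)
Your argument is correct and is exactly the implicit reasoning behind the corollary: the paper states it without proof, treating it as immediate from Lemma~\ref{LemmaHandshakesCancel}, and your enumeration of the $3=1+2$ splits is precisely the combinatorics one has in mind. Your additional remark that exactly one endpoint of~$E$ becomes bivalent (so the chain ambiguity in the footnote to Lemma~\ref{LemmaHandshakesCancel} does not arise) is a nice clarification, though strictly speaking the disjointness of cancelling pairs is already guaranteed by the standing hypothesis $N(v)>2$ on~$\gamma$ invoked in that lemma.
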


\begin{example}[$\Id\boldsymbol{\gamma}_5=0$]\label{ExDiffOf5Wheel}
The pentagon\/-\/wheel cocycle is the sum of two graphs with real coefficients
which is drawn in Fig.~\ref{FigPentaCocycle}.
\begin{figure}[htb]
\centerline{
$\boldsymbol{\gamma}_5={}$
\raisebox{0pt}[50pt][32pt]{
{\unitlength=0.5mm
\begin{picture}(58,53)(-33.5,-5)
\put(27.5,8.5){\circle*{2}}
\put(0,29.5){\circle*{2}}
\put(-27.5,8.5){\circle*{2}}
\put(-17.5,-23.75){\circle*{2}}
\put(17.5,-23.75){\circle*{2}}
\qbezier[70](27.5,8.5)(0,29.5)(0,29.5)
\qbezier[70](0,29.5)(-27.5,8.5)(-27.5,8.5)
\qbezier[70](-27.5,8.5)(-17.5,-23.75)(-17.5,-23.75)
\qbezier[70](-17.5,-23.75)(17.5,-23.75)(17.5,-23.75)
\qbezier[70](17.5,-23.75)(27.5,8.5)(27.5,8.5)
\put(0,0){\circle*{2}}
\qbezier[60](27.5,8.5)(0,0)(0,0)
\qbezier[60](0,29.5)(0,0)(0,0)
\qbezier[60](-27.5,8.5)(0,0)(0,0)
\qbezier[60](-17.5,-23.75)(0,0)(0,0)
\qbezier[60](17.5,-23.75)(0,0)(0,0)
\put(30.5,8.7){{\tiny\textsf{1}}}
\put(2,31){{\tiny\textsf{2}}}
\put(-30.5,8.7){\llap{{\tiny\textsf{3}}}}
\put(-20.5,-27){\llap{{\tiny\textsf{4}}}}
\put(20.5,-27){{\tiny\textsf{5}}}
\put(2,4){{\tiny\textsf{6}}}
\put(13,21.5){{\tiny I}}
\put(-13,21.5){\llap{{\tiny II}}}
\put(-25.5,-10){\llap{{\tiny III}}}
\put(-3.5,-31){{\tiny IV}}
\put(25.5,-10){{\tiny V}}
\put(10,-3){{\tiny VI}}
\put(1.5,13.5){{\tiny VII}}
\put(-7,5.5){\llap{{\tiny VIII}}}
\put(-8.5,-10){\llap{{\tiny IX}}}
\put(7,-15){\llap{{\tiny X}}}
\end{picture}
}
}
$\phantom{m}{}+\dfrac{5}{2}\cdot{}$
\raisebox{0pt}[15pt][10pt]{ 
{\unitlength=0.72mm
\begin{picture}(50,30)(-31,-4)
\put(12,0){\circle*{1.5}}
\put(-12,0){\circle*{1.5}}
\put(25,15){\circle*{1.5}}
\put(-25,15){\circle*{1.5}}
\put(-25,-15){\circle*{1.5}}
\put(25,-15){\circle*{1.5}}
\put(-12,0){\line(1,0){24}}
\put(-25,15){\line(1,0){50}}
\put(-25,-15){\line(1,0){50}}
\put(25,-15){\line(0,1){32}}
\put(-25,15){\line(0,-1){32}}
\qbezier[60](25,15)(12,0)(12,0)
\qbezier[60](-25,15)(-12,0)(-12,0)
\qbezier[60](-25,-15)(-12,0)(-12,0)
\qbezier[60](25,-15)(12,0)(12,0)
\put(12.5,17){\oval(25,10)[t]}
\put(-12.5,-17){\oval(25,10)[b]}
\put(0,2){\line(0,1){11}}
\put(0,-2){\line(0,-1){11}}
\put(-27,-19){\llap{{\tiny\textsf{1}}}}
\put(27,-19){{\tiny\textsf{2}}}
\put(27,17){{\tiny\textsf{3}}}
\put(-27,17){\llap{{\tiny\textsf{4}}}}
\put(-11,2){{\tiny\textsf{5}}}
\put(11,2){\llap{{\tiny\textsf{6}}}}
\put(7,-19){{\tiny I}}
\put(27,-3){{\tiny II}}
\put(-13,17){{\tiny III}}
\put(-27,-3){\llap{{\tiny IV}}}
\put(-17,6){{\tiny V}}
\put(-17,-10){{\tiny VI}}
\put(-3,-4.5){\llap{{\tiny VII}}}
\put(18.5,8){\llap{{\tiny VIII}}}
\put(17.5,-10){\llap{{\tiny IX}}}
\put(-11,-21){\llap{{\tiny X}}}
\end{picture}
}%
}
}
\caption{The Kontsevich\/--\/Willwacher pentagon\/-\/wheel cocycle~$\boldsymbol{\gamma}_5$.}\label{FigPentaCocycle}
\end{figure}
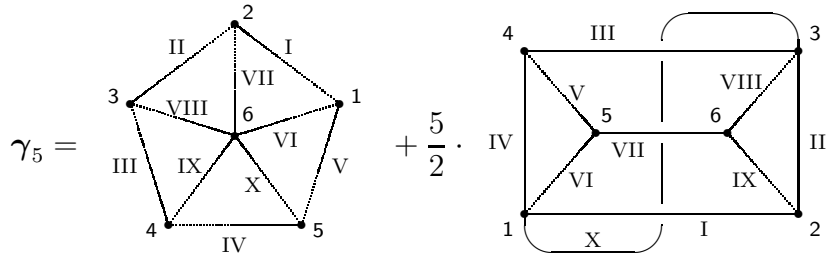
The edges in every 
term are ordered by $I\wedge\cdots\wedge X$.
The differential of a sum of graphs is the sum of their differentials; this is why we calculate them separately and then collect similar terms. By the above, neither contains any leaves; likewise by the handshake Lemma~\ref{LemmaHandshakesCancel}, all the graphs --\,in which a new vertex (of valency~$2$) appears as midpoint of the already existing edge\,-- cancel.
By Corollary~\ref{CorValency3} it remains for us to consider the blow-ups of only the vertices of valency~$\geqslant 4$ (cf.~\cite{Ascona96}). Such are the axis vertex of the pentagon wheel and vertices labelled~\textsf{1} and~\textsf{3} in the other graph (the prism). By blowing up the pentagon wheel axis we shall obtain the (nonzero) `human' and the (zero) `monkey' graphs, presented 
 in what follows. Likewise from the prism graph in~$\boldsymbol{\gamma}_5$ one obtains the `human', the `monkey', and the (zero) `stone'. Let us now discuss this in full detail.

From the pentagon wheel we obtain $2\cdot 5$ Da Vinci's `human' graphs, two of which are portrayed in Fig.~\ref{FigHumanAB}. 
(The factor~$2$ \label{factor2} occurs from the two distinct ways to attach three versus two old edges in the wheel to the loose ends of the inserted edge $\bullet\!{-}\!\bullet$.) 
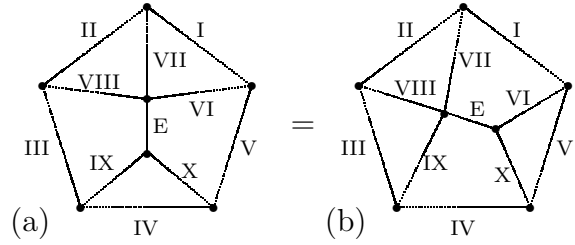
\begin{figure}[htb]
\centerline{
\raisebox{-1.5pt}[45pt][35pt]{
{\unitlength=0.5mm
\begin{picture}(58,53)(-33.5,-5)
\put(27.5,8.5){\circle*{2}}
\put(0,29.5){\circle*{2}}
\put(-27.5,8.5){\circle*{2}}
\put(-17.5,-23.75){\circle*{2}}
\put(17.5,-23.75){\circle*{2}}
\qbezier[80](27.5,8.5)(0,29.5)(0,29.5)
\qbezier[80](0,29.5)(-27.5,8.5)(-27.5,8.5)
\qbezier[80](-27.5,8.5)(-17.5,-23.75)(-17.5,-23.75)
\qbezier[80](-17.5,-23.75)(17.5,-23.75)(17.5,-23.75)
\qbezier[80](17.5,-23.75)(27.5,8.5)(27.5,8.5)
\put(0,5){\circle*{2}}
\put(0,-9.5){\circle*{2}}
\put(0,5){\line(0,-1){14}}
\qbezier[60](27.5,8.5)(0,5)(0,5)
\qbezier[60](-27.5,8.5)(0,5)(0,5)
\qbezier[60](0,29.5)(0,5)(0,5)
\qbezier[60](-17.5,-23.75)(0,-9)(0,-9)
\qbezier[60](17.5,-23.75)(0,-9)(0,-9)
\put(13,21.5){{\tiny I}}
\put(-13,21.5){\llap{{\tiny II}}}
\put(-25.5,-10){\llap{{\tiny III}}}
\put(-3.5,-31){{\tiny IV}}
\put(24.5,-10){{\tiny V}}
\put(11,0.5){{\tiny VI}}
\put(1.5,13.5){{\tiny VII}}
\put(-7,7.5){\llap{{\tiny VIII}}}
\put(-8.5,-14){\llap{{\tiny IX}}}
\put(13.5,-16){\llap{{\tiny X}}}
\put(1.5,-4){{\tiny E}}
\put(-25.5,-30){\llap{(a)}}
\end{picture}
}
}
\phantom{l}
$=$
\raisebox{-1.5pt}[45pt][35pt]{
{\unitlength=0.5mm
\begin{picture}(58,53)(-33.5,-5)
\put(27.5,8.5){\circle*{2}}
\put(0,29.5){\circle*{2}}
\put(-27.5,8.5){\circle*{2}}
\put(-17.5,-23.75){\circle*{2}}
\put(17.5,-23.75){\circle*{2}}
\qbezier[80](27.5,8.5)(0,29.5)(0,29.5)
\qbezier[80](0,29.5)(-27.5,8.5)(-27.5,8.5)
\qbezier[80](-27.5,8.5)(-17.5,-23.75)(-17.5,-23.75)
\qbezier[80](-17.5,-23.75)(17.5,-23.75)(17.5,-23.75)
\qbezier[80](17.5,-23.75)(27.5,8.5)(27.5,8.5)
\put(-5,1){\circle*{2}}
\put(8.5,-3){\circle*{2}}
\qbezier[60](-5,1.5)(8.5,-3)(8.5,-3)
\qbezier[60](27.5,8.5)(8.5,-3)(8.5,-3)
\qbezier[60](-27.5,8.5)(-5,1.5)(-5,1.5)
\qbezier[60](0,29.5)(-5,1.5)(-5,1.5)
\qbezier[60](-17.5,-23.75)(-5,1.5)(-5,1.5)
\qbezier[60](17.5,-23.75)(8.5,-3)(8.5,-3)
\put(13,21.5){{\tiny I}}
\put(-13,21.5){\llap{{\tiny II}}}
\put(-25.5,-10){\llap{{\tiny III}}}
\put(-3.5,-31){{\tiny IV}}
\put(24.5,-10){{\tiny V}}
\put(11,3.5){{\tiny VI}}
\put(-1.5,13.5){{\tiny VII}}
\put(-7,6){\llap{{\tiny VIII}}}
\put(-4,-14){\llap{{\tiny IX}}}
\put(12.5,-17){\llap{{\tiny X}}}
\put(1.5,0.5){{\tiny E}}
\put(-25.5,-30){\llap{(b)}}
\end{picture}
}
}
}
\caption{Two of the fourteen Da Vinci's `human' graphs occurring with weights in~$\Id\boldsymbol{\gamma}_5$.}\label{FigHumanAB}
\end{figure}
We claim that all the five `human' graphs (i.e.\ standing with their feet on the edges~$I$, $\ldots$, $V$ in the pentagon wheel) carry the same sign, providing the overall coefficient $+10=2\cdot(+5)$ 
of such graph in the differential of the wheel. The graph~(b) 
is topologically equal to the graph~(a); indeed, the matching of their edges is $I^{(b)} = V^{(a)}$, $II^{(b)} = I^{(a)}$, $III^{(b)} = II^{(a)}$, $IV^{(b)} = III^{(a)}$, $V^{(b)} = IV^{(a)}$, $VI^{(b)} = X^{(a)}$, $VII^{(b)} = VI^{(a)}$, $VIII^{(b)} = VII^{(a)}$, $IX^{(b)} = VIII^{(a)}$, and $X^{(b)} = IX^{(a)}$; also $E^{(b)} = E^{(a)}$. Hence the postulated ordering of edges in~(b) is 
\begin{multline}\label{EqMaching}
E^{(b)}\wedge I^{(b)}\wedge \cdots\wedge X^{(b)} =  
E^{(a)}\wedge V^{(a)}\wedge I^{(a)}\wedge II^{(a)}\wedge III^{(a)}\wedge IV^{(a)}\wedge
\\
\wedge X^{(a)}\wedge VI^{(a)}\wedge VII^{(a)}\wedge VIII^{(a)}\wedge IX^{(a)}
= + E^{(a)}\wedge I^{(a)}\wedge \cdots\wedge X^{(a)},
\end{multline}
which equals the edge ordering of the graph~(a). For the other three graphs of this shape the equalities of wedge products are similar: a parity\/-\/even permutation of edges works out the mapping of graphs, e.g., to the graph~(a) which we take as the reference. 

From the pentagon wheel we also obtain $2\cdot 5$ `monkey' graphs, a specimen of which is shown 
in Fig.~\ref{FigMonkey} below. 
\begin{figure}[htb]
\centerline{
\raisebox{-1.5pt}[45pt][35pt]{
{\unitlength=0.5mm
\begin{picture}(58,53)(-31.5,-5)
\put(27.5,8.5){\circle*{2}}
\put(0,29.5){\circle*{2}}
\put(-27.5,8.5){\circle*{2}}
\put(-17.5,-23.75){\circle*{2}}
\put(17.5,-23.75){\circle*{2}}
\qbezier[80](27.5,8.5)(0,29.5)(0,29.5)
\qbezier[80](0,29.5)(-27.5,8.5)(-27.5,8.5)
\qbezier[80](-27.5,8.5)(-17.5,-23.75)(-17.5,-23.75)
\qbezier[80](-17.5,-23.75)(17.5,-23.75)(17.5,-23.75)
\qbezier[80](17.5,-23.75)(27.5,8.5)(27.5,8.5)
\put(0,5){\circle*{2}}
\put(0,-9.5){\circle*{2}}
\put(0,5){\line(0,-1){14}}
\qbezier[60](27.5,8.5)(0,5)(0,5)
\qbezier[60](-27.5,8.5)(0,-9)(0,-9)
\qbezier[60](0,29.5)(0,5)(0,5)
\qbezier[60](-17.5,-23.75)(0,5)(0,5)
\qbezier[60](17.5,-23.75)(0,-9)(0,-9)
\put(13,21.5){{\tiny I}}
\put(-13,21.5){\llap{{\tiny II}}}
\put(-25.5,-10){\llap{{\tiny III}}}
\put(-3.5,-31){{\tiny IV}}
\put(24.5,-10){{\tiny V}}
\put(11,0.5){{\tiny VI}}
\put(1.5,13.5){{\tiny VII}}
\put(-8,3.5){\llap{{\tiny VIII}}}
\put(-4.5,-18){\llap{{\tiny IX}}}
\put(13.5,-17){\llap{{\tiny X}}}
\put(1.5,-4){{\tiny E}}
\end{picture}
}
}
\phantom{ll}$=$
\raisebox{-1.5pt}[45pt][35pt]{
{\unitlength=0.6mm
\begin{picture}(58,53)(-31.5,-13)
\put(0,0){\circle*{2}}
\put(15,10){\circle*{2}}
\put(-15,10){\circle*{2}}
\put(25,-10){\circle*{2}}
\put(-25,-10){\circle*{2}}
\put(10,-30){\circle*{2}}
\put(-10,-30){\circle*{2}}
\put(-15,10){\line(1,0){30}}
\put(-15,10){\line(3,-2){15}}
\put(15,10){\line(-3,-2){15}}
\put(-15,10){\line(-1,-2){10}}
\put(15,10){\line(1,-2){10}}
\put(-25,-10){\line(3,-4){15}}
\put(25,-10){\line(-3,-4){15}}
\qbezier[90](-25,-10)(10,-30)(10,-30)
\qbezier[90](25,-10)(-10,-30)(-10,-30)
\put(0,0){\line(-1,-3){10}}
\put(0,0){\line(1,-3){10}}
\put(-1,10.5){{\tiny I}}
\put(-21,-2){\llap{{\tiny II}}}
\put(-17,-26){\llap{{\tiny III}}}
\put(17,-26){{\tiny X}}
\put(21,-2){{\tiny V}}
\put(6,0){{\tiny VI}}
\put(-6,0){\llap{{\tiny VII}}}
\put(-16,-15){{\tiny VIII}}
\put(-4,-8){\llap{{\tiny IX}}}
\put(16,-15){\llap{{\tiny IV}}}
\put(4,-8){{\tiny E}}
\end{picture}
}
}
\phantom{l}$=0$}
\caption{The `monkey' graph: animal touches earth with its palm; this is an example of zero graph.}\label{FigMonkey}
\end{figure}
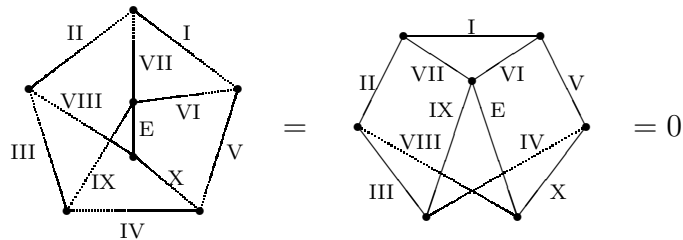   
Note that the `monkey' graph is mirror\/-\/symmetric, see the redrawing.
This symmetry induces a permutation of edges which swaps~5 pairs, so (since 5~is odd) the `monkey' graph is equal to zero. 

Now consider the graphs obtained by blowing up vertices~\textsf{1} and~\textsf{3} in the prism graph. How are the four old neighbors distributed over the ends of the inserted edge? Whenever those four old neighbours are distributed in proportion $4=3+1$ (i.e.\ with valencies~$4$ and~$2$ for the two ends of the inserted edge), there is no contribution from the resulting graphs to~$\Id(\text{prism})$ by the handshake Lemma~\ref{LemmaHandshakesCancel}. So the graphs which could contibute are only those with the $4=2+2$ distribution (i.e.\ with valency~$3$ for either of the ends of the inserted edge). For one fixed neighbour of one of the new edge's ends there are three ways to choose the second neighbour of that vertex. This is how the `human', `monkey', and `stone' graphs are presently obtained.

Let us blow up vertex~\textsf{1} in the prism in these three different ways. First we make the end (now marked~\textsf{1}) of the inserted edge adjacent to~\textsf{2} and~\textsf{3}, and the other end (marked~\textsf{1}$'$)
to vertices~\textsf{4} and~\textsf{5}; the resulting graph is the `human' graph shown in Fig.~\ref{FigHumanFromRoof}.
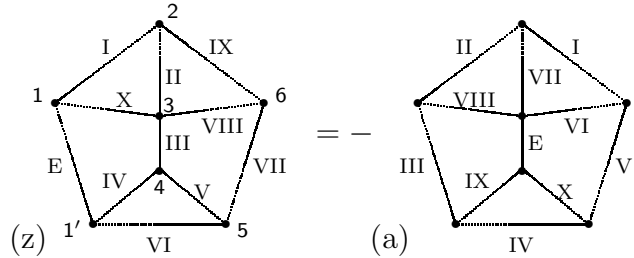
\begin{figure}[htb]
\centerline{
\raisebox{-4.5pt}[47pt][35pt]{
{\unitlength=0.5mm
\begin{picture}(58,53)(-27.5,-5)
\put(27.5,8.5){\circle*{2}}
\put(0,29.5){\circle*{2}}
\put(-27.5,8.5){\circle*{2}}
\put(-17.5,-23.75){\circle*{2}}
\put(17.5,-23.75){\circle*{2}}
\qbezier[80](27.5,8.5)(0,29.5)(0,29.5)
\qbezier[80](0,29.5)(-27.5,8.5)(-27.5,8.5)
\qbezier[80](-27.5,8.5)(-17.5,-23.75)(-17.5,-23.75)
\qbezier[80](-17.5,-23.75)(17.5,-23.75)(17.5,-23.75)
\qbezier[80](17.5,-23.75)(27.5,8.5)(27.5,8.5)
\put(0,5){\circle*{2}}
\put(0,-9.5){\circle*{2}}
\put(0,5){\line(0,-1){14}}
\qbezier[60](27.5,8.5)(0,5)(0,5)
\qbezier[60](-27.5,8.5)(0,5)(0,5)
\qbezier[60](0,29.5)(0,5)(0,5)
\qbezier[60](-17.5,-23.75)(0,-9)(0,-9)
\qbezier[60](17.5,-23.75)(0,-9)(0,-9)
\put(13,21.5){{\tiny IX}}
\put(-13,21.5){\llap{{\tiny I}}}
\put(-25.5,-10){\llap{{\tiny E}}}
\put(-3.5,-31){{\tiny VI}}
\put(24.5,-10){{\tiny VII}}
\put(11,0.5){{\tiny VIII}}
\put(1.5,13.5){{\tiny II}}
\put(-7,7.5){\llap{{\tiny X}}}
\put(-8.5,-14){\llap{{\tiny IV}}}
\put(13.5,-17){\llap{{\tiny V}}}
\put(1.5,-4){{\tiny{III}}}
\put(30.5,8.7){{\tiny\textsf{6}}}
\put(2,31){{\tiny\textsf{2}}}
\put(-30.5,8.7){\llap{{\tiny\textsf{1}}}}
\put(-20.5,-27){\llap{{\tiny\textsf{1}$'$}}}
\put(20.5,-27){{\tiny\textsf{5}}}
\put(-1.5,-15.5){{\tiny\textsf{4}}}
\put(1,6){{\tiny\textsf{3}}}
\put(-29.5,-30){\llap{(z)}}
\end{picture}
}
}
${}=-{}$
\raisebox{-4.5pt}[47pt][35pt]{
{\unitlength=0.5mm
\begin{picture}(58,53)(-33.5,-5)
\put(27.5,8.5){\circle*{2}}
\put(0,29.5){\circle*{2}}
\put(-27.5,8.5){\circle*{2}}
\put(-17.5,-23.75){\circle*{2}}
\put(17.5,-23.75){\circle*{2}}
\qbezier[80](27.5,8.5)(0,29.5)(0,29.5)
\qbezier[80](0,29.5)(-27.5,8.5)(-27.5,8.5)
\qbezier[80](-27.5,8.5)(-17.5,-23.75)(-17.5,-23.75)
\qbezier[80](-17.5,-23.75)(17.5,-23.75)(17.5,-23.75)
\qbezier[80](17.5,-23.75)(27.5,8.5)(27.5,8.5)
\put(0,5){\circle*{2}}
\put(0,-9.5){\circle*{2}}
\put(0,5){\line(0,-1){14}}
\qbezier[60](27.5,8.5)(0,5)(0,5)
\qbezier[60](-27.5,8.5)(0,5)(0,5)
\qbezier[60](0,29.5)(0,5)(0,5)
\qbezier[60](-17.5,-23.75)(0,-9)(0,-9)
\qbezier[60](17.5,-23.75)(0,-9)(0,-9)
\put(13,21.5){{\tiny I}}
\put(-13,21.5){\llap{{\tiny II}}}
\put(-25.5,-10){\llap{{\tiny III}}}
\put(-3.5,-31){{\tiny IV}}
\put(24.5,-10){{\tiny V}}
\put(11,0.5){{\tiny VI}}
\put(1.5,13.5){{\tiny VII}}
\put(-7,7.5){\llap{{\tiny VIII}}}
\put(-8.5,-14){\llap{{\tiny IX}}}
\put(13.5,-17){\llap{{\tiny X}}}
\put(1.5,-4){{\tiny{E}}}
\put(-29.5,-30){\llap{(a)}}
\end{picture}
}
}
}
\caption{One of the `human' graphs obtained by blowing up --\,according to a scenario 
discussed in the text\,-- a vertex of valency four in the prism graph from~$\boldsymbol{\gamma}_5$.}\label{FigHumanFromRoof}
\end{figure}
From the prism graph we obtain $2\cdot 2=4$ such `human' graphs. One of the factors~$2$ is obtained like before, namely by attaching a given set of old edges to one or the other end of the inserted edge $\bullet\!\!\!-\!\!\!\bullet$, 
   see p.~\pageref{factor2}; 
the other factor~$2$ comes by the rotational symmetry of the prism graph.
Indeed, the prism with one diagonal is symmetric under the rotation by angle~$\pi$ that 
transposes the vertices $\mathsf{1} \rightleftarrows \mathsf{3}$, $\mathsf{2} \rightleftarrows \mathsf{4}$, and $\mathsf{5}\rightleftarrows \mathsf{6}$. This is why the same `human' graph is obtained when the vertex~\textsf{3} is blown up according to a similar scenario. We claim that the permutation of edges that relates the two graphs is 
parity\/-\/even (similar to~\eqref{EqMaching}), so they do not cancel but add~up.
Summarizing, the overal coefficient of the `human' graph --\,produced in~$\Id(\text{prism})$ for the edge ordering $E\wedge I\wedge\cdots \wedge X$ shown in Fig.~\ref{FigHumanFromRoof}\,-- equals~$2\cdot 2=+4$.

The count of an overall contribution $10 + \frac{5}{2}\cdot (+4)\cdot (-1 \text{ from edge ordering}) =0$ to the differential $\Id(\boldsymbol{\gamma}_5)$ of the cocycle~$\boldsymbol{\gamma}_5$ will be performed 
using Eq.~\eqref{EqPentaMeetsRoof};
right now let us inspect the vanishing of contributions from the other two types of graphs wich are obtained by the two possible edge distribution scenarios (with respect to the ends of the new edge $\bullet\!\!{-}\!\!\bullet$ that replaces the blown-up vertex~\textsf{1} or~\textsf{3} in the prism).

\smallskip
\noindent%
{\unitlength=0.5mm
\begin{picture}(0,0)(-278,-5)
\put(27.5,8.5){\circle*{2}}
\put(0,29.5){\circle*{2}}
\put(-27.5,8.5){\circle*{2}}
\put(-17.5,-23.75){\circle*{2}}
\put(17.5,-23.75){\circle*{2}}
\qbezier[80](27.5,8.5)(0,29.5)(0,29.5)
\qbezier[80](0,29.5)(-27.5,8.5)(-27.5,8.5)
\qbezier[80](-27.5,8.5)(-17.5,-23.75)(-17.5,-23.75)
\qbezier[80](-17.5,-23.75)(17.5,-23.75)(17.5,-23.75)
\qbezier[80](17.5,-23.75)(27.5,8.5)(27.5,8.5)
\put(0,5){\circle*{2}}
\put(0,-9.5){\circle*{2}}
\put(0,5){\line(0,-1){14}}
\qbezier[60](27.5,8.5)(0,5)(0,5)
\qbezier[60](-27.5,8.5)(0,-9)(0,-9)
\qbezier[60](0,29.5)(0,5)(0,5)
\qbezier[60](-17.5,-23.75)(0,5)(0,5)
\qbezier[60](17.5,-23.75)(0,-9)(0,-9)
\put(13,21.5){{\tiny IX}}
\put(-13,21.5){\llap{{\tiny I}}}
\put(-25.5,-10){\llap{{\tiny E}}}
\put(-3.5,-31){{\tiny VI}}
\put(24.5,-10){{\tiny VII}}
\put(11,0.5){{\tiny VIII}}
\put(1.5,13.5){{\tiny II}}
\put(-9,2.5){\llap{{\tiny IV}}}
\put(-5.5,-18){\llap{{\tiny X}}}
\put(13.5,-17){\llap{{\tiny V}}}
\put(1.5,-4){{\tiny III}}
\put(30.5,8.7){{\tiny\textsf{6}}}
\put(2,31){{\tiny\textsf{2}}}
\put(-30.5,8.7){\llap{{\tiny\textsf{1}}}}
\put(-20.5,-27){\llap{{\tiny\textsf{1}$'$}}}
\put(20.5,-27){{\tiny\textsf{5}}}
\put(-1.5,-15.5){{\tiny\textsf{4}}}
\put(1,6){{\tiny\textsf{3}}}
\end{picture}%
}%
\parbox{115mm}{{}\quad
The `monkey' graph is obtained by blowing up the vertex~\textsf{1} (or~\textsf{3}) in the prism and then attaching the new edge's end, still marked~\textsf{1}, to the vertices~\textsf{2} and~\textsf{4}. The other end, now marked~\textsf{1}$'$, of the new edge becomes adjacent to the vertices~\textsf{3} and~\textsf{5}. We keep in mind that every `monkey' graph itself is equal to zero, hence no contribution to~$\Id(\text{prism})$ occurs.}

\smallskip
So far, the new vertex~\textsf{1} has always been a fixed neighbour of vertex~\textsf{2}, and it was made adjacent to~\textsf{3} in the `human' and to~\textsf{4} in the `monkey' graphs, respectively. The overall set of neigbours of the new edge \textsf{1}--\textsf{1}$'$, apart from the fixed vertex~\textsf{2}, consists of vertices~\textsf{3}, \textsf{4} and~\textsf{5}. So the third scenario to consider is the `stone' graph 
in which the new vertex~\textsf{1} is adjacent to~\textsf{1}$'$, \textsf{2}, and~\textsf{5}, whereas the new vertex~\textsf{1}$'$ neighbours~\textsf{1}, \textsf{3}, and~\textsf{4}.
\begin{figure}[htb]
\centerline{ 
\raisebox{2pt}[44pt][24pt]{
{\unitlength=0.8mm
\begin{picture}(40,35)(-20,0)
\put(10,0){\circle*{1.2}}
\put(-10,0){\circle*{1.2}}
\put(0,10){\circle*{1.2}}
\put(20,0){\circle*{1.2}}
\put(-20,0){\circle*{1.2}}
\put(0,20){\circle*{1.2}}
\put(0,-15){\circle*{1.2}}
\put(0,20){\line(1,-1){20 } }
\put(0,20){\line(-1,-1){20 } }
\put(0,20){\line(0,-1){10 } }
\put(0,10){\line(1,-1){10 } }
\put(0,10){\line(-1,-1){10 } }
\put(10,0){\line(1,0){10 } }
\put(-10,0){\line(-1,0){10 } }
\put(-20,0){\line(4,-3){20 } }
\put(20,0){\line(-4,-3){20 } }
\put(-10,0){\line(2,-3){10 } }
\put(10,0){\line(-2,-3){10 } }
\put(10,-11){{\tiny I}}
\put(10,11){{\tiny II}}
\put(-10,11){\llap{\tiny III}}
\put(-10,-11){\llap{\tiny IV}}
\put(5,-7){\llap{\tiny V}}
\put(11.5,0.5){{\tiny VI}}
\put(0.5,11.5){{\tiny VII}}
\put(-10.75,0.5){\llap{\tiny VIII}}
\put(-5,-7){{\tiny IX}}
\put(-5.5,5){\llap{\tiny X}}
\put(5.5,5){{\tiny E}}
\put(1,20.5){{\tiny\textsf{5}}}
\put(21,0.5){{\tiny\textsf{4}}}
\put(-21,0.5){\llap{{\tiny\textsf{6}}}}
\put(-1,6){{\tiny\textsf{1}}}
\put(-8,-1.5){{\tiny\textsf{2}}}
\put(8,-1.5){\llap{{\tiny\textsf{1}$'$}}}
\put(1,-16.5){{\tiny\textsf{3}}}
\end{picture}
}
}
$=0$
}
\end{figure}
This graph is mirror\/-\/symmetric under the transposition of vertices $\mathsf{1}'\rightleftarrows \mathsf{2}$ and $\mathsf{4} \rightleftarrows \mathsf{6}$, which induces the swaps in five edge pairs, namely, $II \rightleftarrows III$, $E\rightleftarrows X$, $VI\rightleftarrows VIII$, $V\rightleftarrows IX$, and $I\rightleftarrows IV$. Arguing as before, we deduce that every such `stone' graph (obtained by a blow up of either~\textsf{1} or~\textsf{3} in the prism) is~zero.

Our final task in the calculation of~$\Id(\boldsymbol{\gamma}_5)$ is collecting the coefficients of the `human' graphs from~$\Id(\text{5-\/wheel})$ and~$\Id(\text{prism})$, coming not only with coefficients~$10$ and~$4$ respectively, but also with the respective edge orderings. To discriminate edges between the two pictures, that is originating from the pentagon wheel and the prism, let us use the superscripts~($a$) and~($z$), 
see Fig.~\ref{FigHumanFromRoof}.
The edge matching is $E^{(z)} = III^{(a)}$, $I^{(z)} = II^{(a)}$, $II^{(z)} = VII^{(a)}$, $III^{(z)} = E^{(a)}$, $IV^{(z)} = IX^{(a)}$, $V^{(z)} = X^{(a)}$, $VI^{(z)} = IV^{(a)}$, $VII^{(z)} = V^{(a)}$, $VIII^{(z)} = VI^{(a)}$, $IX^{(z)} = I^{(a)}$, and $X^{(z)} = VIII^{(a)}$. Consequently, for the edge orderings we have
\begin{multline}\label{EqPentaMeetsRoof}
E^{(z)}\wedge I^{(z)}\wedge\cdots\wedge X^{(z)} ={} \\
III^{(a)}\wedge II^{(a)}\wedge VII^{(a)}\wedge E^{(a)}\wedge IX^{(a)}\wedge X^{(a)}\wedge IV^{(a)}\wedge V^{(a)}\wedge VI^{(a)}\wedge I^{(a)}\wedge VIII^{(a)}\\
{} = (-)^{23} \, E^{(a)}\wedge I^{(a)}\wedge \cdots\wedge X^{(a)}. 
\end{multline}
This argument shows that the graph differential of the linear combination $(+1)\cdot{}$pentagon\/-\/wheel $+$ $\frac{5}{2}\cdot{}$ prism,
with either graph's edge ordering specified as in Example~\ref{Ex5Wheel},
vanishes. In other words, $\boldsymbol{\gamma}_5$ is a $\Id$-\/cocycle.
\end{example}

\section{A representative of the heptagon\/-\/wheel cocycle~$\boldsymbol{\gamma}_7$}\label{SecResult}
\noindent%
It is already known that the heptagon\/-\/wheel cocycle~$\boldsymbol{\gamma}_7$, the existence of which was stated 
in Theorem~\textup{\ref{ThWillwacherWheels}}, is unique modulo $\Id$-\/trivial terms in the respective cohomology group of connected graphs on $8$~vertices and $14$~edges \textup{(}hence with $7$~basic \mbox{loops),~cf.\:\cite{KhoroshkinWillwacherZivkovic}.}

\begin{theor}\label{ThHepta}
The encoding of every term in a 
representative of the cocycle~$\boldsymbol{\gamma}_7$ is given in Table~\textup{\ref{TabHeptagonCocycle}}, 
the format of lines in which is the lexicographic\/-\/ordered list of fourteen edges $I \wedge \cdots \wedge XIV$ followed by the nonzero real coefficient.
The forty\/-\/six graphs that form this representative of the $\Id$-\/cohomology class~$\boldsymbol{\gamma}_7$  are shown on pages~\textup{\pageref{pStartHepta}--\pageref{pEndHepta}.}
\end{theor}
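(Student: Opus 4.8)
The plan is to reduce the statement to a finite linear\/-\/algebra computation, followed by a direct check that the specific list in Table~\ref{TabHeptagonCocycle} is annihilated by~$\Id$. Observe first that it is enough to produce \emph{one} nonzero $\Id$\/-\/cocycle, built from connected $(8,14)$\/-\/graphs with all valencies~$\geqslant3$, that involves the heptagon\/-\/wheel graph with a nonzero coefficient: such a cocycle exists by Theorem~\ref{ThWillwacherWheels}, and the cohomology class it marks is unique modulo $\Id$\/-\/exact terms (as recalled above the statement, following~\cite{KhoroshkinWillwacherZivkovic}, and confirmed below in Proposition~\ref{ThCount}); hence any two such cocycles differ by a coboundary and either of them is an admissible representative of~$\boldsymbol{\gamma}_7$. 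Because~$\Id$ increases the number of vertices by one and the number of edges by one while keeping the number of basic loops equal to~$7$, the whole computation takes place inside the finite\/-\/dimensional spaces spanned by connected graphs with all valencies~$\geqslant3$ on $7$, $8$ and $9$ vertices and, respectively, $13$, $14$ and $15$ edges.

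I would then make~$\Id$ explicit as a matrix. For each of the three bidegrees, enumerate connected graphs up to isomorphism, fix for each a canonical edge ordering~$\sfE$, and discard the zero graphs --\,detected by searching the automorphism group for a symmetry that induces a parity\/-\/odd permutation of edges (Definition~\ref{DefZeroGraph}). For every basis graph~$\gamma$ on $7$ or $8$ vertices compute $\Id(\gamma)=[\bullet\!\!{-}\!\!\bullet,\gamma]$ by blowing up each vertex in turn, re\/-\/attaching its incident edges to the two ends of the new edge in all possible ways and prepending the new edge to the ordering as in Definition~\ref{DefInsert}; the terms with a univalent end cancel as in Example~\ref{ExDTetra}, and the handshake terms cancel by Lemma~\ref{LemmaHandshakesCancel}, so in practice only blow\/-\/ups of vertices of valency~$\geqslant4$ need be retained (Corollary~\ref{CorValency3}). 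Rewriting each surviving graph in canonical form, with the sign read off from the induced permutation of edges, and collecting like terms yields the matrix of~$\Id$ from the $(7,13)$\/-\/graphs to the $(8,14)$\/-\/graphs and the matrix of~$\Id$ from the $(8,14)$\/-\/graphs to the $(9,15)$\/-\/graphs, both over~$\mathbb{Q}$. The cocycle condition $\Id(\boldsymbol{\gamma}_7)=0$ is the kernel of the second matrix; its quotient by the image of the first is the cohomology in bidegree~$(8,14)$, which I expect to be one\/-\/dimensional --\,matching Proposition~\ref{ThCount} and~\cite[Table~1]{KhoroshkinWillwacherZivkovic}\,-- and generated by a class with nonzero heptagon\/-\/wheel component. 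Normalising that component and subtracting a coboundary so as to clear as many coefficients as possible produces the $46$\/-\/term linear combination recorded in Table~\ref{TabHeptagonCocycle}; the residual freedom in the choice of coboundary is precisely the remark that the number of nonzero coefficients is not canonical.

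Finally, for the concrete representative of Table~\ref{TabHeptagonCocycle} one verifies $\Id(\boldsymbol{\gamma}_7)=0$ directly with the \textsc{Sage} code of Appendix~\ref{AppSage}: read each term's lexicographic edge ordering from its encoding, compute the differential graph\/-\/by\/-\/graph as above, reduce everything to canonical forms carrying signs, and check that every $(9,15)$\/-\/graph occurs with total coefficient~$0$. Since the heptagon wheel appears with a nonzero coefficient and $\boldsymbol{\gamma}_7\in\ker\Id\setminus\img\Id$, the class is nontrivial and, by the uniqueness recalled above, coincides with the cocycle whose existence is asserted in Theorem~\ref{ThWillwacherWheels}. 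The main obstacle is the scale and the sign bookkeeping rather than any conceptual point: one has to generate canonical forms for the many graphs that populate these three bidegrees, correctly track the parity of the edge permutation induced by each automorphism (to flag zero graphs) and by each blow\/-\/up, and perform exact rational linear algebra of this size, all while making sure that the graph enumeration is exhaustive under the standing hypotheses (connected, no tadpoles or multiple edges, all valencies~$\geqslant3$). This is exactly why the verification is computer\/-\/assisted; the drawings on pp.~\pageref{pStartHepta}--\pageref{pEndHepta} and the encodings in Table~\ref{TabHeptagonCocycle} are included so that the result can be checked independently.
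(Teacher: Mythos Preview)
Your proposal is correct and follows essentially the same computer\/-\/assisted linear\/-\/algebra scheme as the paper: generate the relevant graphs, assemble the matrix of~$\Id$, solve $\Id(\boldsymbol{\gamma}_7)=0$, identify the free parameters with coboundaries, and fix a representative by setting them to zero. The only notable difference is that you restrict from the outset to graphs with all valencies~$\geqslant3$, whereas the paper's proof works with all connected $(8,14)$\/-\/graphs (valency~$\geqslant1$) and only afterwards, in Remark~\ref{RemCountNgt2} and Table~\ref{TabCountNgt2}, confirms that the valency restriction yields the same cohomology; either route is valid.
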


\begin{table}[htb]
\caption{The heptagon\/-\/wheel graph
 cocycle~$\boldsymbol{\gamma}_7$.}\label{TabHeptagonCocycle}
{\upshape\tiny
\begin{tabular}{l r c l r} 
\hline
Graph encoding\strut & Coeff. & & Graph encoding & Coeff.\\[0.5pt]
\hline
16	17	18	23	25	28	34	38	46	48	57	58	68	78	& 	$\mathbf{1}\phantom{/1}$		& & 	12	13	18	25	26	37	38	45	46	47	56	57	68	78	&	$-7\phantom{/1}$	\\
12	14	18	23	27	35	37	46	48	57	58	67	68	78	& 	$-21/8$ &	& 	12	14	16	23	25	36	37	45	48	57	58	67	68	78	&	$77/8$	\\
13	14	18	23	25	28	37	46	48	56	57	67	68	78	& 	$-77/4$ &	& 	13	16	17	24	25	26	35	37	45	48	58	67	68	78	&	$-7\phantom{/1}$	\\
12	13	15	24	27	35	36	46	48	57	58	67	68	78	& 	$-35/8$ &	& 	14	15	17	23	26	28	37	38	46	48	56	57	68	78	&	$49/4$	\\
12	13	18	24	26	37	38	46	47	56	57	58	68	78	& 	$49/8$ &	& 	12	16	18	27	28	34	36	38	46	47	56	57	58	78	&	$-147/8$	\\
14	17	18	23	25	26	35	37	46	48	56	58	67	78	& 	$77/8$ &	& 	12	15	16	27	28	35	36	38	45	46	47	57	68	78	&	$-21/8$	\\
12	13	18	26	27	35	38	45	46	47	56	57	68	78	& 	$-105/8$	 &	& 	12	14	18	23	27	35	36	45	46	57	58	67	68	78	&	$-35/8$	\\
12	14	18	23	27	36	38	46	48	56	57	58	67	78	& 	$7/8$ &	& 	14	15	16	23	26	28	37	38	46	48	57	58	67	78	&	$-49/4$	\\
12	14	15	23	27	35	36	46	48	57	58	67	68	78	& 	$35/8$ &	& 	12	15	18	23	28	34	37	46	48	56	57	67	68	78	&	$105/8$	\\
12	13	14	27	28	36	38	46	47	56	57	58	68	78	& 	$-49/8$ &	& 	12	14	17	23	26	37	38	46	48	56	57	58	68	78	&	$-49/8$	\\
12	13	18	25	27	34	36	47	48	56	58	67	68	78	& 	$35/4$ &	& 	12	16	18	25	27	35	36	37	45	46	48	57	68	78	&	$49/1\lefteqn{6}$	\\
12	13	14	25	26	36	38	45	47	57	58	67	68	78	& 	$-119/1\lefteqn{6}$ &	& 	12	13	18	25	27	35	36	46	47	48	56	57	68	78	&	$7\phantom{/1}$	\\
12	13	15	24	28	36	38	47	48	56	57	67	68	78	& 	$49/8$ &	& 	12	14	18	25	28	34	36	38	47	57	58	67	68	78	&	$-7\phantom{/1}$	\\
12	13	14	23	28	37	46	48	56	57	58	67	68	78	& 	$77/4$ &	& 	12	16	18	25	27	35	36	37	45	46	48	58	67	78	&	$-77/1\lefteqn{6}$	\\
12	15	17	25	26	35	36	38	45	47	48	67	68	78	& 	$-49/8$ &	& 	12	14	18	23	27	35	38	46	47	57	58	67	68	78	&	$77/4$	\\
13	15	18	24	26	28	37	38	46	47	56	57	68	78	& 	$-49/4$ &	& 	12	14	15	23	27	36	38	46	48	57	58	67	68	78	&	$35/2$	\\
13	14	18	25	26	28	36	38	47	48	56	57	67	78	& 	$-49/4$ &	& 	12	13	18	25	27	34	36	46	48	57	58	67	68	78	&	$-105/8$	\\
12	14	18	23	28	35	37	46	48	56	57	67	68	78	& 	$-7\phantom{/1}$ & &	12	15	16	25	27	35	36	38	46	47	48	57	68	78	&	$-7\phantom{/1}$	\\
12	14	18	23	28	36	38	46	47	56	57	58	67	78	& 	$-7\phantom{/1}$ &		& 	12	13	16	25	28	34	37	47	48	57	58	67	68	78	&	$-147/1\lefteqn{6}$	\\
12	15	16	25	27	35	36	38	46	47	48	58	67	78	& 	$49/8$ &	& 	12	13	17	25	26	35	37	45	46	48	58	67	68	78	&	$-77/4$	\\
12	14	18	23	28	36	37	46	47	56	57	58	68	78	& 	$49/8$ &		& 	12	14	17	23	27	35	38	46	48	57	58	67	68	78	&	$-49/8$	\\
12	13	15	26	27	35	36	45	47	48	58	67	68	78	& 	$-7\phantom{/1}$ &	& 	12	13	15	26	28	35	37	45	46	47	58	67	68	78	&	$-7/4$	\\
12	13	18	24	28	35	38	46	47	57	58	67	68	78	& 	$7\phantom{/1}$ &	& 	12	14	18	23	26	36	38	47	48	56	57	58	67	78	&	$-7\phantom{/1}$ 	\\
\hline
\end{tabular}} 
\end{table}

\begin{proof}[Proof scheme] 
This reasoning is computer\/-\/assisted.
First, all connected graphs on $8$~vertices and $14$~edges, and without multiple edges were generated.
(There are $1579$~such graphs; note that arbitrary valency $N(v) \geqslant 1$ of vertices was allowed.)
The coefficient of the heptagon wheel was set equal to~$+1$, all other coefficients still to be determined.
After calculating the differential of the sum of all these weighted graphs (we used a program in {\sc Sage}, see Appendix~\ref{AppSage}), zero graphs were eliminated and the remaining terms were collected (in the same way as is explained in~\S\ref{SecExamples}).
In the resulting sum of weighted graphs on $9$~vertices and $15$~edges, we equated each coefficient to zero.
We solved this linear algebraic system w.r.t.\ the coefficients of graphs in $\boldsymbol{\gamma}_7$.
There are $N_{\text{im}}(7) = 35$ free parameters in the general solution; such parameters count the coboundaries which cannot modify the cohomology class marked by any particular representative (see Table~\ref{TabCount} on p.~\pageref{TabCount} below).
Therefore the solution~$\boldsymbol{\gamma}_7$ is unique modulo $\Id$-exact terms.
All those free parameters are now set 
to zero and the resulting nonzero values of the graph coefficients are listed in Table~\ref{TabHeptagonCocycle}.
\end{proof}

\begin{proposition}[{see~\cite[Table~1]{KhoroshkinWillwacherZivkovic}}]\label{ThCount}
The space of nontrivial $\Id$-\/cocycles which are built of connected graphs on $n$~vertices and $2n-2$~ edges at~$1 \leqslant n \leqslant 9$ is spanned by the terahedron~$\boldsymbol{\gamma}_3$, pentagon\/-\/wheel cocycle~$\boldsymbol{\gamma}_5$ that consists of two graphs \textup{(}see Example~\textup{\ref{Ex5Wheel})}, heptagon\/-\/wheel cocycle~$\boldsymbol{\gamma}_7$ from Theorem~\textup{\ref{ThHepta},}
and the Lie bracket~$[\boldsymbol{\gamma}_3,\boldsymbol{\gamma}_5]$.
At the same time, 
for either~$n=5$ or $n=7$, the respective graph cohomology groups are trivial.\footnote{None of the 
results in Theorem~\ref{ThHepta} and Proposition~\ref{ThCount} involves floating point operations in the way how it is obtained; hence even if computer\/-\/assisted, both the claims are exact.}
\end{proposition}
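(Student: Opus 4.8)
\noindent
The plan is a direct, computer\/-\/assisted dimension count, following the scheme already used in the proof of Theorem~\ref{ThHepta}. First I would fix the data structure: a non\/-\/oriented graph on~$m$ vertices is stored together with an ordering of its edge set; two such encodings represent the same element of~$\mathfrak{G}$ precisely when some graph isomorphism carries one onto the other, and the induced permutation of edges then determines the relative sign (a graph carried onto itself by an odd edge permutation being a zero graph, cf.\ Definition~\ref{DefZeroGraph}). Next, note that $\Id=[\bullet\!\!{-}\!\!\bullet,\cdot\,]$ sends a connected graph to a sum of connected graphs and raises both the vertex and the edge count by one, hence it preserves the first Betti number $b=\#E-\#V+1$; the locus $\#E=2\#V-2$ meets the slice $b=\const$ exactly at $\#V=b+1$, so for each target~$n$ the span $G_n$ of all connected graphs on $n$ vertices and $2n-2$ edges sits as the middle term of
\[
G_n^-\ \xrightarrow{\ \Id\ }\ G_n\ \xrightarrow{\ \Id\ }\ G_n^+ ,
\]
where $G_n^-$ and $G_n^+$ are the spans of the connected graphs on, respectively, $(n-1)$ vertices with $2n-3$ edges and $(n+1)$ vertices with $2n-1$ edges (all three slices carrying loop order $n-1$). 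Hence $\dim H_n=\dim G_n-\rank(\Id|_{G_n})-\rank(\Id|_{G_n^-})$.

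Then, for each~$n$ with $4\leqslant n\leqslant9$, I would enumerate the three relevant sets of connected graphs (no tadpoles, no multiple edges, under the same admissible\/-\/valency convention as for Theorem~\ref{ThHepta}), discard the zero graphs, assemble the two integer boundary matrices from $\Id\gamma=[\bullet\!\!{-}\!\!\bullet,\gamma]$ by expanding the bracket and the insertions via Definition~\ref{DefInsert} and collecting topologically equal terms with the sign bookkeeping of~\S\ref{SecGraphComplex}, and compute their ranks; the incoming rank $\rank(\Id|_{G_n^-})$ is exactly the coboundary count that appears, at $n=8$, in the uniqueness statement for~$\boldsymbol{\gamma}_7$. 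For $n\leqslant3$ there is no admissible graph on $(n,2n-2)$ at all (apart from the non\/-\/cocycle~$\bullet$ at $n=1$), so $H_n=0$ trivially. Carrying out the elimination over~$\mathbb{Q}$, and independently modulo two large primes together with the check $\Id\circ\Id=0$ on each slice, yields $\dim H_n=1$ for $n\in\{4,6,8,9\}$ and $\dim H_n=0$ for $n\in\{1,2,3,5,7\}$; this is the numerical part of the statement and matches~\cite[Table~1]{KhoroshkinWillwacherZivkovic}.

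It then remains to pin down the classes. The tetrahedron~$\boldsymbol{\gamma}_3$, the pentagon\/-\/wheel cocycle~$\boldsymbol{\gamma}_5$ and the heptagon\/-\/wheel cocycle~$\boldsymbol{\gamma}_7$ lie in $\ker\Id$ at $n=4,6,8$ by~\S\ref{SecExamples} and Theorem~\ref{ThHepta}, whereas $[\boldsymbol{\gamma}_3,\boldsymbol{\gamma}_5]$ lies at $n=9$ (as $4+6-1=9$ and $6+10=16=2\cdot9-2$) and is itself a cocycle because $\Id$ is a graded derivation of $[\cdot,\cdot]$ and both factors are cocycles. For each of these four cochains I would run one more linear\/-\/algebra query, namely checking that it does not lie in the column span of $\Id|_{G_n^-}$; since $\dim H_n\leqslant1$ for $n=4,6,8,9$, non\/-\/exactness then forces the class to span~$H_n$. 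This re\/-\/proves the nontriviality of $\boldsymbol{\gamma}_3$, $\boldsymbol{\gamma}_5$, $\boldsymbol{\gamma}_7$ and establishes it for $[\boldsymbol{\gamma}_3,\boldsymbol{\gamma}_5]$ (anticipated from Brown's freeness result~\cite{Brown}), so the four classes span the whole space of nontrivial cocycles for $1\leqslant n\leqslant9$.

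The main difficulty is the scale of the computation and the fragility of the sign bookkeeping rather than anything conceptual: at $n=8,9$ the cochain spaces already carry of order $10^3$ generators, the boundary matrices are correspondingly large with nontrivial integer entries coming from the insertion combinatorics, and one misplaced sign in canonicalizing a graph or in collecting similar terms would silently corrupt a rank. A carefully tested implementation of graph canonicalization with correct edge\/-\/permutation sign tracking, of zero\/-\/graph detection, and of exact rank computation (fraction\/-\/free Gaussian elimination, or a Smith\/-\/normal\/-\/form computation over~$\BBZ$, cross\/-\/checked modulo several primes) is therefore essential; the agreement with~\cite[Table~1]{KhoroshkinWillwacherZivkovic} and the internal checks $\Id^2=0$ and $\Id\boldsymbol{\gamma}_i=0$ provide the safeguards.
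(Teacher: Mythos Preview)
Your proposal is correct and follows essentially the same computer\/-\/assisted dimension count as the paper: enumerate the connected graphs in the relevant bi\/-\/gradings, form the boundary matrices for~$\Id$, and extract the cohomology dimensions by exact linear algebra, obtaining $\dim H_n=1$ for $n\in\{4,6,8,9\}$ and $0$ otherwise. The paper phrases the count as $N_{\text{ker}}-N_{\text{im}}$ with $N_{\text{im}}=N_\delta-N_0$ (kernel dimensions rather than ranks), which is the same arithmetic as your $\dim G_n-\rank(\Id|_{G_n})-\rank(\Id|_{G_n^-})$; your extra step of explicitly checking that each named cocycle lies outside the image of~$\Id|_{G_n^-}$ is a welcome addition that the paper leaves implicit (relying instead on Theorem~\ref{ThWillwacherWheels} and Brown's result for nontriviality).
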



\begin{proof}[Verification] 
The dimension $N_{\text{ker}}$ of the space of cocycles built of connected graphs $\boldsymbol{\gamma}$ on $n$ vertices and $2n-2$ edges is equal to the number of free parameters in the general solution to the linear system $\Id(\text{sum of such graphs } \gamma \text{ with undetermined coefficients}) = 0$.
At the same time, to determine the dimension $N_{\text{im}}$ of the subspace of coboundaries $\gamma = \Id(\delta)$, i.e.\ of those cocycles which are the differentials of connected graphs on $n-1$ vertices and $2n-3$ edges, we first count the number of $N_\delta$ of nonzero connected graphs $\delta$ in that vertex-edge bi-grading.
Then we subtract from $N_\delta$ the number $N_0$ of free parameters in the general solution to the linear algebraic system $\Id(\text{sums of such graphs }\delta\text{ with undetermined coefficients}) = 0$.
This subtrahend counts the number of relations between exact terms $\gamma = \Id(\delta)$; for $n < 9$ it is zero.
The dimension of cohomology group $H^*(n)$ in bi-grading $(n, 2n-2)$ is then $N_{\text{ker}} - N_{\text{im}} = N_{\text{ker}} - (N_\delta - N_0)$.

Our present count of the overall number of connected graphs (and of the zero graphs among them) and the dimensions $N_{\text{ker}}, N_\delta, N_0$ and $N_{\text{im}}$ of the respective vector spaces are summarized in Tables~\ref{TabCount} and~\ref{TabCountNgt2}.
\end{proof}

\begin{table}[htb]
\caption{Dimensions of connected graph spaces and cohomology groups.}\label{TabCount}
\begin{tabular}{|c | c | c | c | c | c | c | c | c | c| c |}
\hline
\multicolumn{2}{|c|}{$n$} & $\#E$ & $\#(\text{graphs})$ & $\#(=0)$ & \multicolumn{2}{|c|}{$\#(\neq 0)$, $N_\delta$} & \multicolumn{2}{|c|}{$N_{\text{ker}}$, $N_0$} & $N_{\text{im}}$ & $\dim H^*(n)$ \\
\hline
4 & & 6 & 1 & 0 & 1 & & 1 & & & 1 \\
& 3 & 5 & 0 & -- & -- & -- & & -- & -- & \\
\hline
5 & & 8 & 2 & 2 & 0 & & -- & & & 0 \\
& 4 & 7 & 0 & -- & -- & -- & & -- & -- & \\
\hline
6 & & 10 & 14 & 8 & 6 & & 1 & & & 1 \\
& 5 & 9 & 1 & 1 & -- & 0 & & -- & -- & \\
\hline
7 & & 12 & 126 & 78 & 48 & & 1 & & & 0 \\
& 6 & 11 & 9 & 8 & -- & 1 & & 0 & 1 & \\
\hline
8 & & 14 & 1579 & 605 & 974 & & 36 & & & 1 \\
& 7 & 13 & 95 & 60 & -- & 35 & & 0 & 35 & \\
\hline
9 & & 16 & 26631 & 7557 & 19074 & & 883 & & & 1 \\ 
& 8 & 15 & 1515 & 602 & -- & 913 & & 31 & 882 & \\ 
\hline
\end{tabular}
\end{table}

\begin{rem}\label{RemCountNgt2}
This reasoning covers all the connected graphs with specified number of vertices and edges, meaning that the valency $N(v)$ of every graph vertex $v$ can be any positive number (if $n > 1$). 
By Lemma~\ref{LemmaHandshakesCancel} 
on p.~\pageref{LemmaHandshakesCancel}
it is seen that for the subspaces $V_{>2}$ of connected graphs restricted by $N(v) > 2$ for all $v$, the inclusion $\Id(V_{>2}) \subseteq V_{>2}$ holds. 
Therefore, the dimensions of cohomology groups for graphs \emph{with} such restriction on valency cannot exceed the dimension of respective cohomology groups for all the graphs under study (i.e.\ $N(v) > 0$).\footnote{\label{FootWhyValencyThree}%
Indeed, we recall that these cohomology dimensions --\,in the count with versus without restriction $N(v)>2$ of the valency\,-- are the same (e.g., see \cite[Proposition~3.4]{WillwacherGRT} with a sketch of the proof
).%
}
This means that trivial cohomology groups 
remain trivial under the extra assumption $N(v) > 2$ on valency; yet we already know the generators $\boldsymbol{\gamma}_3$, $\boldsymbol{\gamma}_5$, $\boldsymbol{\gamma}_7$,
and $[\boldsymbol{\gamma}_3,\boldsymbol{\gamma}_5]$ of all the nontrivial cohomology groups at~$n \leqslant 9$. This is confirmed in Table~\ref{TabCountNgt2}. 
\begin{table}[htb]
\caption{Dimensions of connected graph spaces with $N(v) > 2$ and dimensions of cohomology groups in bi\/-\/degree~$(n,2n-2)$.}\label{TabCountNgt2}
\begin{tabular}{|c | c | c | c | c | c | c | c | c | c| c |}
\hline
\multicolumn{2}{|c|}{$n$} & $\#E$ & $\#(\text{graphs})$ & $\#(=0)$ & \multicolumn{2}{|c|}{$\#(\neq 0)$, $N_\delta$} & \multicolumn{2}{|c|}{$N_{\text{ker}}$, $N_0$} & $N_{\text{im}}$ & $\dim H^*(n)$ \\
\hline
4 & & 6 & 1 & 0 & 1 & & 1 & & & 1 \\
& 3 & 5 & 0 & -- & -- & -- & & -- & -- & \\
\hline
5 & & 8 & 1 & 1 & 0 & & -- & & & 0 \\
& 4 & 7 & 0 & -- & -- & -- & & -- & -- & \\
\hline
6 & & 10 & 4 & 2 & 2 & & 1 & & & 1 \\
& 5 & 9 & 1 & 1 & -- & 0 & & -- & -- & \\
\hline
7 & & 12 & 18 & 12 & 6 & & 1 & & & 0 \\
& 6 & 11 & 5 & 4 & -- & 1 & & 0 & 1 & \\
\hline
8 & & 14 & 136 & 61 & 75 & & 11 & & & 1 \\
& 7 & 13 & 30 & 20 & -- & 10 & & 0 & 10 & \\
\hline
9 & & 16 & 1377 & 498 & 879 & & 164 & & & 1 \\
& 8 & 15 & 309 & 130 & -- & 179 & & 16 & 163 & \\
\hline
\end{tabular}
\end{table}

We finally note that the numbers of nonzero graphs with a specified number of vertices and edges (and $N(v) > 2$), which we list in Table~\ref{TabCountNgt2}, all coincide with the respective entries in Table~II in the paper \cite{WillwacherZivkovic2015Table}.
\end{rem}

\begin{rem}\label{RemWaterfallGenerator}
We expect that there are many $\Id$-\/cocycles on $n$~vertices and $2n-2$~edges other than the ones containing the $(2\ell+1)$-\/wheel graphs (which Theorem~\ref{ThWillwacherWheels} provides) or their iterated commutators.
Namely, some terms in a weighted sum $\gamma\in\ker\Id$ can be disjoint graphs; moreover, the vertex\/-\/edge bi\/-\/grading of a connected component of a given term can be other than $(m,2m-2)$ for~$m\in\BBN$. Indeed, for any tuple of $\Id$-\/cocycles~$\gamma_i$ on $n_i$~vertices and $E_i$~edges satisfying $\sum_i n_i=n$ and $\sum_i E_i=2n-2$, one has that $\gamma\mathrel{{:}{=}}\bigsqcup_i \gamma_i\in\ker\Id$
.
The graphs~$\gamma_i$ can be restricted by a requirement that each of them belongs to the domain of the orientation mapping~$\Or$, so that $\Or(\gamma)$~is a Kontsevich bi\/-\/vector graph (see~\cite{Ascona96} and~\cite{f16,WeFactorize5Wheel}). In this way new classes of generators of infinitesimal symmetries $\dot{\cP}=\Or(\gamma)(\cP)$ are obtained for 
Poisson structures~$\cP$.
\end{rem}

\subsection*{Acknowledgements}
The authors are grateful to M.~Kontsevich and T.~Willwacher for helpful discussion;
the authors thank the referees for criticism and advice.
This research 
was supported in part by 
JBI~RUG project~106552 (Groningen, The Netherlands).
A part of this research was done while R.\,Buring and A.\,Kiselev were 
visiting at the~IH\'ES (Bures\/-\/sur\/-\/Yvette, France) and A.\,Kiselev 
was visiting at the~MPIM (Bonn) and Johannes Gutenberg\/--\/Uni\-ver\-si\-t\"at in
Mainz, Germany.


\appendix
\section{The heptagon\/-\/wheel cocycle~$\boldsymbol{\gamma}_7$}\label{AppHepta}
\noindent%
In each term, the ordering of edges is lexicographic 
(cf.\ Table~\ref{TabHeptagonCocycle}).\label{pStartHepta}

\noindent%
$\boldsymbol{\gamma}_7={}$
\textbf{1}
\raisebox{-55pt}[87pt][55pt]{
\scalebox{.8}{
}
}%
.
\end{flushright}%
\label{pEndHepta}

\medskip
\noindent%
The sum of graphs~$\boldsymbol{\gamma}_7$ is a $\Id$-\/cocycle \emph{because}
when the differential~$\Id(\boldsymbol{\gamma}_7)$ is constructed, 
the images of many terms from~$\boldsymbol{\gamma}_7$ overlap in~$\Id(\boldsymbol{\gamma}_7)$
(by graphs on $9$~vertices and $15$~edges). Finding out what the resulting adjacency table is for the forty\/-\/six graphs in~$\boldsymbol{\gamma}_7$ and \,--more generally\,-- exploring whether such `meta\/-\/graphs', the vertices of which themselves are graphs that constitute $\Id$-\/cocycles 
modulo coboundaries, are in any sense special, is an intriguing open problem.
(We claim that for~$\boldsymbol{\gamma}_7$, its meta\/-\/graph is connected.)

\newpage
\section{\textsc{Sage} code for the graph differential}\label{AppSage}
\noindent%
The following script, written in \textsc{Sage} version~7.2, can 
calculate the differential of an arbitrary sum of non\/-\/oriented graphs with a specified ordering on the set of edges for every term, and reduce sums of graphs modulo vertex and edge labelling.\footnote{Another software package for numeric computation of the graph complex cohomology groups in various degrees and loop orders is available from 
{\texttt{https://github.com/wilthoma/GHoL}.}%
}
As an illustration, it is shown how this can be used to find cocycles in the graph complex.

\tiny
\begin{verbatim}
import itertools

def insert(user, victim, position):
    result = []
    victim = victim.relabel({k : k + position - 1 for k in victim.vertices()},
                            inplace=False)
    victim = victim.copy(immutable=False)
    for edge in victim.edges():
        victim.set_edge_label(edge[0], edge[1], edge[2] + len(user.edges()))
    user = user.relabel({k : k if k <= position else k + len(victim) - 1 for k in user.vertices()},
                        inplace=False)
    for attachment in itertools.product(victim, repeat=len(user.edges_incident(position))):
        new_graph = user.union(victim)
        edges_in = user.edges_incident(position)
        new_graph.delete_edges(edges_in)
        new_edges = [(k if a == position else a, k if b == position else b, c)
                     for ((a,b,c), k) in zip(edges_in, attachment)]
        new_graph.add_edges(new_edges)
        result.append((1, new_graph))
    return result

def graph_bracket(graph1, graph2):
    result = []
    for v in graph2:
        result.extend(insert(graph2, graph1, v))
    sign_factor = 1 if len(graph1.edges()) % 2 == 1 and len(graph2.edges()) % 2 == 1 else -1
    for v in graph1:
        result.extend([(sign_factor*c, g) for (c,g) in insert(graph1, graph2, v)])
    return result
    
def graph_differential(graph):
    edge = Graph([(1,2,1)])
    return graph_bracket(edge, graph)

def differential(graph_sum):
    result = []
    for (c,g) in graph_sum:
        result.extend([(c*d,h) for (d,h) in graph_differential(g)])
    return result

def is_zero(graph):
    for sigma in graph.automorphism_group():
        edge_permutation = Permutation([graph.edge_label(sigma(i), sigma(j))
                                        for (i,j,l) in sorted(graph.edges(), key=lambda (a,b,c): c)])
        if edge_permutation.sign() == -1:
            return True
    return False

def reduce(graph_sum):
    graph_table = {}
    for (c,g) in graph_sum:
        if is_zero(g): continue
        
        # canonically label vertices:
        g_canon, relabeling = g.canonical_label(certify=True)
        # shift labeling up by one:
        g_canon.relabel({k : k + 1 for k in g_canon.vertices()})
        
        # canonically label edges (keeping track of the edge permutation):
        count = 1
        edges_seen = set([])
        edge_relabeling = {}
        for v in g_canon:
            edges_in = sorted(g_canon.edges_incident(v), key = lambda (a,b,c): a if b == v else b)
            for e in edges_in:
                if frozenset([e[0], e[1]]) in edges_seen: continue
                edge_relabeling[count] = e[2]
                g_canon.set_edge_label(e[0], e[1], count)
                edges_seen.add(frozenset([e[0], e[1]]))
                count += 1

        permutation = Permutation([edge_relabeling[i] for i in range(1, len(g.edges())+1)])
        g_canon = g_canon.copy(immutable=True)
        if g_canon in graph_table:
            graph_table[g_canon] += permutation.sign()*c
        else:
            graph_table[g_canon] = permutation.sign()*c
    return [(graph_table[g], g) for g in graph_table if not graph_table[g] == 0]

# Examples of graphs:

def wheel(n):
    return Graph([(k, 1, k-1) for k in range(2, n+2)] + [(k, k+1 if k <= n else 2, n+k-1)
                  for k in range(2, n+2)])

tetrahedron = wheel(3)
fivewheel = wheel(5)

print "The differential of the tetrahedron is", reduce(graph_differential(tetrahedron))

# Finding all cocycles on 6 vertices and 10 edges:

n = 6
graph_list = list(filter(lambda G: G.is_connected() and len(G.edges()) == 2*n - 2, graphs(n)))
# shift labeling up by one
for g in graph_list:
    g.relabel({k : k+1 for k in g.vertices()})
    for (k, (i,j,_)) in enumerate(g.edges()):
        g.set_edge_label(i, j, k+1)
# build an ansatz for a cocycle, with undetermined coefficients
nonzeros = filter(lambda g: not is_zero(g), graph_list)
coeffs = [var('c%d' % k) for k in range(0, len(nonzeros))]
cocycle = zip(coeffs, nonzeros)
# calculate its differential and reduce it
d_cocycle = []
for cocycle_term in cocycle:
    d_cocycle.extend(reduce(differential([cocycle_term])))
d_cocycle = reduce(d_cocycle)
# set the coefficients of the graphs in the reduced sum to zero, and solve
linsys = []
for (c,g) in d_cocycle:
    linsys.append(c==0)
print solve(linsys, coeffs)
\end{verbatim}

\normalsize
\noindent%
We finally recall that, to the best of our knowledge, the routines by McKay~\cite{BarNatan} for graph automorphism computation are now used in \textsc{SAGE} (hence by the above program).


\begin{thebibliography}{10}

\bibitem{BarNatan}
\by{Bar\/-\/Natan D., McKay B.\,D.} (2001) Graph cohomology --- An overview and some computations, 13~p. (\jour{unpublished}),
\texttt{http://www.math.toronto.edu/\symbol{"7E}drorbn/papers/GCOC/GCOC.ps}

\bibitem{f16}
\by{Bouisaghouane A., Buring R., Kiselev A.} (2017)
The Kontsevich tetrahedral flow revisited, \jour{J.~Geom.\ Phys.} 
\vol{119}
, 272--285.\ 
(\jour{Preprint} \texttt{arXiv:1608.01710} 
[q-alg])

\bibitem{tetra16}
\by{Bouisaghouane A., Kiselev A.\,V.} (2017)
Do the Kon\-tse\-vich tetrahedral flows preserve or destroy the space of Poisson bi\/-\/vectors\,?
\jour{J.~Phys.\textup{:}\ Conf.\ Ser.} \vol{804} 
Proc.\ XXIV Int.\ conf.\
`Integrable Systems and Quantum Symmetries' (14--18 June 2016, $\smash{\text{\v{C}VUT}}$ Prague,
Czech Republic), Paper~012008, 10~p.\ 
(\jour{Pre\-print} 
\texttt{arXiv:1609.06677} [q-alg])

\bibitem{Brown}
\by{Brown F.} (2012) Mixed Tate motives over~$\mathbb{Z}$, 
\jour{Ann.\ Math.~(2)} \vol{175}:2, 949--976.

\bibitem{sqs15}
\by{Buring R., Kiselev A.\,V.} (2017) On the Kon\-tse\-vich $\star$-\/product asso\-ci\-a\-ti\-vi\-ty me\-cha\-n\-ism, \jour{PEPAN Letters} \vol{14}:2, 403--407.\ %
(\jour{Preprint} \texttt{arXiv:1602.09036} [q-alg])

\bibitem{cpp}
\by{Buring R., Kiselev A.\,V.} (2017) 
The expansion $\star$ mod~$\bar{o}(\hbar^4)$
and computer\/-\/assisted proof schemes in the Kon\-tse\-vich deformation quantization,
\jour{Preprint} $\smash{\text{IH\'ES}}$/M/17/05, 
\texttt{arXiv:1702.00681}~
[math.CO], 67~p.

\bibitem{WeFactorize5Wheel}
\by{Buring R., Kiselev A.\,V., Rutten N.\,J.} (2017)
The Kontsevich\/--\/Willwacher pentagon\/-\/wheel symmetry of 
Poisson structures
, SDSP~IV (12--16~June 2017, $\smash{\text{\v{C}VUT}}$ D\v{e}\v{c}\'\i{}n, Czech Republic)
.

\bibitem{DolgushevRogersWillwacher}
\by{Dolgushev V. A., Rogers C. L., Willwacher T. H.} (2015) Kontsevich's graph complex, GRT, and the deformation complex of the sheaf of polyvector fields,
\jour{Ann.\ Math.} \vol{182}:3, 855--943.\ %
(\jour{Preprint} \texttt{arXiv:1211.4230} [math.KT])

\bibitem{DrinfeldGRT1990}
\by{Drinfel'd V. G.} (1990)
On quasitriangular quasi\/-\/Hopf algebras and on a group that is closely connected
with $\text{Gal}(\overline{\mathbb{Q}}/\mathbb{Q})$, 
\jour{Algebra i Analiz} \vol{2}:4, 149--181 (in Russian); Eng.\ transl.\ in:
\jour{Leningrad Math.~J.} \vol{2}:4, 829--860 (1990).

\bibitem{MKParisECM}
\by{Kontsevich M.} (1994)
Feynman diagrams and low\/-\/dimensional topology,
\book{First Europ.\ Congr.\ of Math.} \vol{2} (Paris, 1992), 
Progr.\ Math. \vol{120}, Birkh\"auser, Basel, 97--121. 

\bibitem{MKZurichICM}
\by{Kontsevich M.} (1995)
Homological algebra of mirror symmetry, 
\book{Proc.\ Intern. Congr. Math.}~\vol{1} 
(Z\"urich, 1994), 
Birkh\"auser, Basel, 120--139.

\bibitem{Ascona96}
\by{Kontsevich M.} (1997)
Formality conjecture. 
\book{Deformation theory and symplectic geometry} (Ascona 
1996, D.\,
Sternheimer, J.\,
Rawnsley and S.\,
Gutt, eds), 
Math.\ Phys.\ Stud.~\vol{20}, Kluwer Acad.\ Publ., Dordrecht, 139--156.

\bibitem{Kontsevich2017Bourbaki}
\by{Kontsevich M.} (2017) Derived Grothendieck\/--\/Teichm\"uller group and graph complexes [after T.~Will\-wa\-cher], \jour{S\'eminaire Bourbaki} (69\`eme ann\'ee, Janvier 2017), no.~1126, 26~p.

\bibitem{KhoroshkinWillwacherZivkovic}
\by{Khoroshkin A., Willwacher T., \v{Z}ivkovi\'c M.} (2017) Differentials on graph complexes, \jour{Adv.\ Math.} \vol{307}, 1184--1214.\ %
(\jour{Preprint} \texttt{arXiv:1411.2369} [q-alg])

\bibitem{WillwacherRossi14042047}
\by{Rossi C.\,A., Willwacher T.} (2014)
P.~Etingof's conjecture about Drinfeld associators,
\jour{Preprint} \texttt{arXiv:1404.2047} [q-alg], 47~p.

\bibitem{WillwacherGRT}
\by{Willwacher T.} (2015) M.~Kontsevich's graph complex and the Grothendieck\/--\/Teichm\"uller Lie algebra,
\jour{Invent.\ Math.} \vol{200}:3, 671--760.\ %
(\jour{Preprint} \texttt{arXiv:1009.1654} [q-alg]) 

\bibitem{WillwacherZivkovic2015Table}
\by{Willwacher T., \v{Z}ivkovi\'{c} M.} (2015)
Multiple edges in M.~Kontsevich's graph complexes and computations of the dimensions and Euler characteristics, \jour{Adv.\ Math.} \vol{272}, 553--578.\ %
(\jour{Preprint} \texttt{arXiv:1401.4974} 
[q-alg])

\end{thebibliography}
\end{document}